\newtheorem{mytheorem}{\textbf{Theorem}}[section]
\newtheorem{mylemma}{\textbf{Lemma}}[section]
\newtheorem{remark}{\textbf{Remark}}[section]
\def\be{\begin{equation}}
\def\ee{\end{equation}}
\def\bea{\begin{eqnarray}}
\def\eea{\end{eqnarray}}
\def\bt{\begin{theorem}}
\def\et{\end{theorem}}
\def\bl{\begin{lemma}}
\def\el{\end{lemma}}
\def\br{\begin{remark}}
\def\er{\end{remark}}
\def\bc{\begin{corollary}}
\def\ec{\end{corollary}}
\def\bd{\begin{definition}}
\def\ed{\end{definition}}
\def\s0t{\sup _{0\leq \tau\leq t}}
\def\C0T{C([0,T];\,}
\def\DAS{D( A^{\frac{S}{2}})}
\def\DAS1{D( A^{ \frac{S+1}{2}})}
\def\dis{\displaystyle}
 \def\Uo1{\|U_0 \| _{W^{1,1}}}
 \def\Uo2{\|U_0\| _{W^{2,1}}}
 \def\e1t{e^{\lambda_1 t}}
 \def\e2t{e^{\lambda_2 t}}
 \def\e3t{e^{\lambda_3 t}}
 \def\l1a{\lambda _1}
 \def\l2a{\lambda _2}
 \def\l3a{\lambda _3}
\def\l123a{(\lambda _1-\lambda _2)(\lambda _1-\lambda _3)}
\def\l213a{(\lambda _2-\lambda _1)(\lambda _2-\lambda _3)}
\def\l312a{(\lambda _3-\lambda _1)(\lambda _3-\lambda _2)}
\def\la1x{(\lambda _1 + k\xi ^2)}
\def\la2x{(\lambda _2 + k\xi ^2)}
\def\la3x{(\lambda _3 + k\xi ^2)}
\def\kx2{k \xi ^2}
\def\non{\nonumber }
\begin{document}

\begin{center}
   {\large  CONVERGENCE TO EQUILIBRIUM FOR THE SEMILINEAR PARABOLIC EQUATION
WITH DYNAMICAL BOUNDARY CONDITION}\\[1cm]
                   \end{center}

                   \begin{center}
                  {\sc Hao Wu}\\
 School of Mathematical Sciences, Fudan University \\
Handan Road 220, 200433 Shanghai, P.R. China\\
 Email: haowufd@yahoo.com
                   \vspace{0.2cm}
\end{center}
       \vspace{1cm}


\begin{abstract}
\noindent This paper is concerned with the asymptotic behavior of
the solution to
 the following semilinear parabolic equation
 \be  u_t -\Delta u  + f(u) = 0, \qquad (x,t) \in \Omega
 \times \mathbb{R}^+ ,\non
 \ee
subject to the dynamical boundary condition \be
  \partial_\nu u+ \mu u + u_t = 0, \qquad (x,t) \in \Gamma
 \times \mathbb{R}^+,
  \non \ee and the initial condition \be u|_{t=0} =
u_0(x),\quad x \in \Omega, \non\ee where $\Omega\subset
\mathbb{R}^n$ $(n\in\mathbb{N}^+)$ is a bounded domain with smooth
boundary $\Gamma$, $\nu$ is the outward normal direction to the
boundary and $\mu\in \{0,1\}$. $f$ is analytic with respect to
unknown function $u$. Our main goal is to prove the convergence of a
global solution to an equilibrium as time goes to infinity by means
of a suitable \L ojasiewicz-Simon type inequality.\\
\textbf{Keywords}: Semilinear parabolic equation, dynamical
boundary condition, \L ojasiewicz-Simon inequality.

\end{abstract}


\section{Introduction}
\setcounter{equation}{0}
\par In this paper we consider the following semilinear parabolic equation
 \be  u_t -\Delta u  + f(u) = 0, \qquad (x,t) \in \Omega
 \times \mathbb{R}^+ ,\label{1.1}
 \ee
subject to the dynamical boundary condition \be
\partial_\nu u+ \mu u + u_t = 0, \qquad \qquad (x,t) \in \Gamma
 \times \mathbb{R}^+,
\label{1.2} \ee and the initial condition \be u|_{t=0} = u_0(x),
\qquad x \in \Omega. \label{1.3} \ee In the above, $\Omega\subset
\mathbb{R}^n$ $(n\in\mathbb{N}^+)$ is a bounded domain with smooth
boundary $\Gamma$. $\nu$ is the outward normal direction to the
boundary and $\mu\in \{ 0,1\}$.

Parabolic equation and systems with dynamical boundary conditions
have been extensively studied in the literature (see for instance,
\cite{AQB,BS,E2,E3,E4,FQ1,Hi,JG} etc.). In particular, local
existence and uniqueness of solution to general quasilinear
parabolic equation (systems) with
 dynamical boundary condition has been established in a series of papers by
 Escher \cite{E2,E3,E4} (see also \cite{Hi} for a semigroup approach in the $H^2_p(\Omega)$-setting, and \cite{BS}
 for the solvability
 result in a weighted H\"older space).
 Moreover, in \cite{E3,E4}, the author showed that the
 solution defines a local semiflow on certain Bessel potential
 spaces including the standard Sobolev space $W^{1,p}$. Based on the approach in \cite{E2,E3,E4} etc.,
 in \cite{FQ1} the authors studied the wellposedness of a semilinear
 parabolic equation subject to a nonlinear dynamical boundary condition with subcritical growth for nonlinearities
 and then investigated the large time behavior of the solution such as blow-up phenomenon.
 In our present paper, we are now interested in the question  whether the global
 solution to a class of semilinear parabolic equation with subcritical nonlinearity and dynamical
 boundary condition \eqref{1.1}--\eqref{1.3} will converge to an equilibrium as time goes to infinity.

Before stating our main results we make some assumptions
on nonlinearity $f$. \\
(\textbf{F1}) $f(s)$ is analytic for $s\in \mathbb{R}$. \\
 (\textbf{F2}) $$|f'(s)|\leq c(1+|s|^{p}),\;\;\;\; \forall
  s\in \mathbb{R},\ p\in [0,\alpha),$$
  where
  \begin{equation}\alpha:=
  \begin{cases}+\infty & n=1,2,\\ \frac{4}{n-2}& n\geq 3.
  \end{cases}\non
  \end{equation}
 (\textbf{F3})
 \begin{itemize}
 \item for $\mu=1$,\\
 \begin{equation}\liminf_{|s|\rightarrow
  +\infty}\frac{f(s)}{s}> -\frac{1}{4}\lambda, \nonumber
  \end{equation}
  where $\lambda>0$ is the best
  Sobolev
  constant in the following imbedding inequality \be \int_\Omega |\nabla
  u|^2 dx + \int_\Gamma u^2 dS \geq \lambda \int_\Omega u^2 dx;
  \nonumber \ee
\item for $\mu=0$,\\
\begin{equation}\liminf_{|s|\rightarrow
  +\infty}\frac{f(s)}{s}> 0. \nonumber
  \end{equation}
  \end{itemize}
\br Assumption (\textbf{F1}) is made so that we can  use  an
extended \L ojasiewicz-Simon inequality to prove our convergence
result (see Theorem \ref{con2} below). Assumption (\textbf{F2})
implies that the nonlinear term has a subcritical growth. Under
this assumption, we are able to prove the local existence and
uniqueness of solutions by adopting the argument in \cite{FQ1}
with some modifications. Moreover, due to the subcritical growth,
 existence of a strong solution to stationary problem can be
obtained by variational method. Assumption (\textbf{F3}) is
 a  condition needed to ensure  global existence of
solution to our problem \eqref{1.1}-\eqref{1.3}. For simplicity of
exposition the nonlinear term $f$ is assumed to depend only on
$u$. However, the results in this paper still remain true for the
nonlinear term $f(x,u)$ with additional smoothness assumption with
respect to $x$.\er Throughout this paper we simply denote the norm
in $L^2(\Omega)$ by
 $\parallel \cdot
 \parallel$ and  equip $H^1(\Omega)$ with the equivalent norm
$$\|u\|_{H^1(\Omega)}=\left(\int_\Omega|\nabla u|^2dx+ \int_\Gamma
 u^2dS\right)^{1/2}.$$

We are now in a position to state our main results.
 \begin{mytheorem}\label{glo}
  Suppose (\textbf{F1})(\textbf{F2})(\textbf{F3}) are satisfied. Then for
 any initial data $u_0\in H^1(\Omega)$, problem
 (\ref{1.1})--(\ref{1.3}) admits a unique global
  solution such that
$$ u\in C([0, +\infty);H^1(\Omega))\cap C((0, +\infty);H^2(\Omega))\cap
 C^1((0,+\infty);L^2(\Omega)),$$
$$\gamma(u)\in C([0,+\infty);H^{\frac{1}{2}}(\Gamma)) \cap C((0,+\infty);H^{\frac{3}{2}}(\Gamma))\cap
 C^1((0,+\infty);H^{\frac{1}{2}}(\Gamma)),$$
 where $\gamma(u)$ is the trace operator.
Moreover, if $u_0\in H^2(\Omega)$, then
$$ u\in C([0, +\infty);H^2(\Omega)),\ \ \gamma(u)\in C([0,+\infty);H^{\frac{3}{2}}(\Gamma)).$$
 \end{mytheorem}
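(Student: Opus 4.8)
\noindent\textbf{Proof of Theorem \ref{glo} (plan).}
The plan is to run the classical three–step scheme for semilinear parabolic problems: (i) local well-posedness with parabolic smoothing, obtained by recasting \eqref{1.1}--\eqref{1.3} as an abstract parabolic equation; (ii) a uniform a~priori bound for $\|u(t)\|_{H^1(\Omega)}$ coming from the gradient (Lyapunov) structure of the equation together with the coercivity built into (\textbf{F3}); and (iii) a continuation argument upgrading the local solution to a global one. For step (i) I would follow \cite{FQ1}, which rests on \cite{E2,E3,E4}, with only cosmetic modifications: writing $U=(u,\gamma(u))$, the linear part $u_t=\Delta u$ in $\Omega$, $u_t=-\partial_\nu u-\mu u$ on $\Gamma$, is governed by a Wentzell-type operator $A$ that generates an analytic semigroup on the product space modelled on $L^2(\Omega)\times L^2(\Gamma)$ with form domain $H^1(\Omega)$ in the equivalent norm fixed above. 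By (\textbf{F2}) and the Sobolev embedding theorem the Nemytskii map $u\mapsto f(u)$ is locally Lipschitz from $H^1(\Omega)$ into the relevant interpolation/extrapolation space (the subcriticality $p<4/(n-2)$ leaving room to spare), so a contraction argument on the variation-of-constants formula produces a unique maximal solution on $[0,T_{\max})$ with the smoothing effect $u\in C((0,T_{\max});H^2(\Omega))\cap C^1((0,T_{\max});L^2(\Omega))$ and the stated trace regularity, and with $u\in C([0,T_{\max});H^2(\Omega))$ when $u_0\in H^2(\Omega)$; the existence time can be taken to depend only on $\|u_0\|_{H^1(\Omega)}$.

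For step (ii), set $F(s)=\int_0^s f(\tau)\,d\tau$ and
\[
E(u)=\frac12\int_\Omega|\nabla u|^2\,dx+\frac{\mu}{2}\int_\Gamma u^2\,dS+\int_\Omega F(u)\,dx .
\]
For $t>0$ the smoothing effect makes it legitimate to multiply \eqref{1.1} by $u_t$ and integrate over $\Omega$; applying Green's formula and substituting $\partial_\nu u=-\mu u-u_t$ from \eqref{1.2}, the boundary contributions combine into $\tfrac{\mu}{2}\tfrac{d}{dt}\int_\Gamma u^2\,dS+\int_\Gamma u_t^2\,dS$ and one obtains the dissipation identity
\[
\frac{d}{dt}E(u(t))=-\|u_t\|^2-\int_\Gamma u_t^2\,dS\le 0 .
\]
Integrating from $\varepsilon$ to $t$ and letting $\varepsilon\to 0^+$ — using that $u\in C([0,T_{\max});H^1(\Omega))$ and that $v\mapsto\int_\Omega F(v)$ is continuous on $H^1(\Omega)$, which follows from the subcritical growth of $F$ implied by (\textbf{F2}) — gives $E(u(t))\le E(u_0)$ on $[0,T_{\max})$. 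Now (\textbf{F3}) is exactly what turns this into an $H^1$ bound: it provides constants with $F(s)\ge -as^2-C$ for some $0<a<\lambda/2$ when $\mu=1$, and $F(s)\ge\delta s^2-C$ with $\delta>0$ when $\mu=0$. Combining this with the imbedding inequality $\lambda\|u\|^2\le\|u\|_{H^1(\Omega)}^2$ in the case $\mu=1$, and with a trace interpolation $\int_\Gamma u^2\le\eta\|\nabla u\|^2+C_\eta\|u\|^2$ to recover the boundary part of the norm in the case $\mu=0$, yields $E(u)\ge c\|u\|_{H^1(\Omega)}^2-C$ with $c>0$. Hence $\sup_{0\le t<T_{\max}}\|u(t)\|_{H^1(\Omega)}\le M(\|u_0\|_{H^1(\Omega)})<\infty$.

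Step (iii) is then immediate: if $T_{\max}<\infty$, the uniform $H^1$ bound together with the fact that the local existence time is controlled by the $H^1$ norm of the datum allows one to restart the solution from a time close to $T_{\max}$ and continue it past $T_{\max}$, contradicting maximality; thus $T_{\max}=+\infty$, and the $H^2$ statements follow from the corresponding local assertions of step (i). The step I expect to be the real work is step (i): one must pin down the functional-analytic framework for the dynamical boundary condition precisely enough that the analytic-semigroup and maximal-regularity machinery of \cite{E2,E3,E4,FQ1} applies to the present problem, where the nonlinearity lies in the interior rather than on the boundary, and one must keep careful track of the trace regularity. The secondary delicate point is the coercivity estimate of step (ii) in the case $\mu=0$, where $\int_\Gamma u^2$ is absent from $E$ and must be absorbed into the Dirichlet energy via a trace inequality.
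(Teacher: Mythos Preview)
Your plan is correct and matches the paper's proof essentially step for step: the paper cites \cite{E4,FQ1} for local well-posedness with the blow-up criterion in $H^1$, derives the same energy identity \eqref{2.3}, and extracts the $H^1$ bound from (\textbf{F3}) by the same case analysis on $\mu$. The only simplification relative to your outline is that for $\mu=0$ the paper does not need a trace interpolation to recover $\int_\Gamma u^2\,dS$: since (\textbf{F3}) in that case gives $\int_\Omega F(u)\ge \tfrac{\delta}{2}\|u\|^2-C$, one directly controls the \emph{standard} $H^1$ norm $\|\nabla u\|^2+\|u\|^2$, which is equivalent to the one fixed in the introduction.
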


 \begin{mytheorem}\label{con2}
  Suppose that (\textbf{F1}) (\textbf{F2}) (\textbf{F3}) are satisfied. Then for
 any initial data $u_0\in H^1(\Omega)$, and in addition $u_0\in L^\frac{4n}{n-2}(\Omega)$, $\gamma(u_0)\in
L^\frac{4n}{n-2}(\Gamma)$ when $n\geq 3$, the global
  solution to problem (\ref{1.1})--(\ref{1.3})
  converges to an equilibrium $\psi(x)$ in the topology of $H^1(\Omega)$ as time goes to infinity, i.e., \be
  \dis{\lim_{t\rightarrow +\infty}}(\|u(t, \cdot)-\psi\|_{H^1(\Omega)} + \|u_t(t, \cdot)\|)=0. \label{1.5a}\ee
  Here
  $\psi(x)$ is an equilibrium to problem
  (\ref{1.1})--(\ref{1.3}), i.e., $\psi(x)$ is a strong solution to the following
  nonlinear elliptic boundary value
  problem:
  \be \left\{\begin{array}{l}  - \Delta \psi + f(\psi)=0,\quad x\in \Omega, \\
   \partial_\nu \psi + \mu \psi=0, \;\;\; x\in \Gamma.\\
   \end{array}
   \label{1.6}
  \right.
 \ee
 \end{mytheorem}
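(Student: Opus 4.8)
The plan is to follow the by-now-classical \L ojasiewicz-Simon strategy adapted to the dynamical boundary condition. First I would set up the natural energy functional
\be
E(u)=\frac12\int_\Omega|\nabla u|^2\,dx+\frac{\mu}{2}\int_\Gamma u^2\,dS+\int_\Omega F(u)\,dx,\non
\ee
where $F(s)=\int_0^s f(\sigma)\,d\sigma$, and verify the basic energy identity: along the flow given by Theorem \ref{glo}, $\frac{d}{dt}E(u(t))=-\|u_t\|^2-\|u_t\|_{L^2(\Gamma)}^2\le 0$. Integrating in time gives $\int_0^\infty(\|u_t\|^2+\|u_t\|_{L^2(\Gamma)}^2)\,dt<\infty$, and combined with the dissipativity coming from (\textbf{F3}) (which I expect to yield a uniform-in-time bound on $\|u(t)\|_{H^1(\Omega)}$ via the Sobolev-type inequality in (\textbf{F3})), standard parabolic smoothing estimates give that the trajectory $\{u(t):t\ge 1\}$ is relatively compact in $H^1(\Omega)$. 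Hence the $\omega$-limit set $\omega(u_0)$ is nonempty, compact, connected, and consists entirely of equilibria, i.e. strong solutions of \eqref{1.6}; moreover $E$ is constant, equal to some $E_\infty$, on $\omega(u_0)$.

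The heart of the argument is the \L ojasiewicz-Simon inequality: for each $\psi\in\omega(u_0)$ there exist constants $\theta\in(0,1/2]$, $C>0$ and a neighborhood (in $H^1(\Omega)$, but with the extra $L^{4n/(n-2)}$ integrability when $n\ge 3$) of $\psi$ on which
\be
|E(u)-E(\psi)|^{1-\theta}\le C\,\|E'(u)\|_{(H^1(\Omega))^*}.\non
\ee
Here $E'(u)$ is the gradient of $E$ with respect to the appropriate duality, and one checks that $-\Delta u+f(u)$ together with $\partial_\nu u+\mu u$ on $\Gamma$ realizes $E'(u)$, so that $\|E'(u)\|$ controls exactly the right-hand sides of \eqref{1.1}–\eqref{1.2}. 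Proving this inequality requires writing $E'$ as an (analytic) nonlinear map between suitable Banach/Hilbert spaces, showing the linearization $L=-\Delta+f'(\psi)$ (with the Robin condition $\partial_\nu+\mu$) is Fredholm of index zero on the relevant scale, splitting off the finite-dimensional kernel, and applying the abstract analytic \L ojasiewicz-Simon theorem; the analyticity of $f$ from (\textbf{F1}) and the subcritical growth from (\textbf{F2}) are exactly what make the Nemytskii operator $u\mapsto f(u)$ analytic on the chosen spaces — this is why the additional $L^{4n/(n-2)}$ hypothesis on $u_0$ (and its trace) is imposed for $n\ge 3$.

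With the inequality in hand I would run the standard contradiction/continuation argument: fix $\psi\in\omega(u_0)$; since $E(u(t))\downarrow E_\infty=E(\psi)$, for $t$ large $u(t)$ lies in the \L ojasiewicz neighborhood of $\psi$ (using compactness of the orbit and the structure of $\omega(u_0)$, possibly covering $\omega(u_0)$ by finitely many such neighborhoods). Then compute $-\frac{d}{dt}(E(u(t))-E_\infty)^\theta=\theta(E(u(t))-E_\infty)^{\theta-1}(\|u_t\|^2+\|u_t\|_{L^2(\Gamma)}^2)$ and bound below using the \L ojasiewicz inequality together with $\|E'(u)\|\le C(\|u_t\|+\|u_t\|_{L^2(\Gamma)})$ (which is precisely the equation), to get
\be
-\frac{d}{dt}(E(u(t))-E_\infty)^\theta\ge c\,(\|u_t\|+\|u_t\|_{L^2(\Gamma)}).\non
\ee
Integrating yields $\int_1^\infty(\|u_t\|+\|u_t\|_{L^2(\Gamma)})\,dt<\infty$, hence $u(t)$ is Cauchy in $L^2(\Omega)$ (and $\gamma(u)$ in $L^2(\Gamma)$), so $u(t)$ converges to a single limit $\psi$; interpolation with the uniform $H^2$-bound (for $t\ge 1$) upgrades this to convergence in $H^1(\Omega)$, and the energy identity forces $\|u_t(t)\|\to 0$, giving \eqref{1.5a}.

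The main obstacle I anticipate is establishing the \L ojasiewicz-Simon inequality in the correct functional-analytic setting for the dynamical boundary condition: one must choose function spaces in which (i) the orbit is precompact, (ii) the map $u\mapsto(-\Delta u+f(u),\,\partial_\nu u+\mu u|_\Gamma)$ is analytic — which is where the subcritical exponent and the extra $L^{4n/(n-2)}$ integrability genuinely enter — and (iii) the linearized operator is Fredholm of index zero, so that the abstract \L ojasiewicz machinery applies; reconciling these three requirements simultaneously, and carefully tracking the boundary contributions to both $E$ and $E'$, is the technical crux, whereas the energy estimates, the compactness of the orbit, and the final integration argument are comparatively routine.
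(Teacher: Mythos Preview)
Your overall strategy is correct and matches the paper's: energy identity, precompactness of the orbit in $H^1(\Omega)$, $\omega$-limit set consisting of equilibria, a \L ojasiewicz--Simon inequality, and the integration argument giving $\int^\infty\|u_t\|\,dt<\infty$. The main substantive difference is in how the \L ojasiewicz--Simon inequality is obtained. You propose to invoke an abstract analytic \L ojasiewicz--Simon theorem after verifying analyticity of the Nemytskii map and Fredholmness of the linearization on a suitable scale. The paper instead proceeds concretely: it first records that the inequality $\|{-\Delta w+f(w)}\|_{(H^1)'}\ge C|E(w)-E(\psi)|^{1-\theta'}$ is already known for $w$ satisfying the \emph{homogeneous} condition $\partial_\nu w+\mu w=0$ (by the arguments of \cite{HM,J981}), and then, for a general $u\in H^2(\Omega)$, constructs such a $w$ by solving an auxiliary Robin (respectively Neumann) problem with the same interior data as $u$; the continuity of the Robin/Neumann lift gives $\|u-w\|_{H^1}\le C\|\partial_\nu u+\mu u\|_{L^2(\Gamma)}$, from which one deduces
\[
\|{-\Delta u+f(u)}\|_{(H^1)'}+\|\partial_\nu u+\mu u\|_{L^2(\Gamma)}\ \ge\ |E(u)-E(\psi)|^{1-\theta}.
\]
Both routes are valid; the paper's comparison-to-homogeneous-boundary argument is more elementary and sidesteps setting up the abstract framework, while your approach is cleaner conceptually but requires care in choosing spaces where all three of your conditions (i)--(iii) hold simultaneously.

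One point you should correct: the extra hypothesis $u_0\in L^{4n/(n-2)}(\Omega)$, $\gamma(u_0)\in L^{4n/(n-2)}(\Gamma)$ for $n\ge 3$ is \emph{not} what makes the Nemytskii operator analytic in the \L ojasiewicz--Simon step (subcriticality (\textbf{F2}) alone handles that, since the inequality is local near $\psi\in H^2\cap L^\infty$). In the paper this integrability assumption is used earlier, in the a~priori estimates of Theorem~\ref{glo1}: testing the equation with $|u|^{p-2}u$ for $p=4n/(n-2)$ yields the bound $\|f'(u)\|_{L^n(\Omega)}\le C$, which is then fed into the $u_t$-energy estimates to obtain the uniform-in-time $H^2(\Omega)$ bound and hence the precompactness of the orbit in $H^1(\Omega)$. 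So in your write-up the $L^{4n/(n-2)}$ assumption belongs with the ``relatively compact trajectory'' step, not with the \L ojasiewicz--Simon inequality.
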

Before giving the detailed  proof of our convergence theorem, we
briefly recall some related results in the literature.  For the
semilinear parabolic equation with homogeneous Dirichlet
 boundary condition, concerning
convergence to an equilibrium as time goes to infinity, we notice
that in one spacial dimension, the situation is much simpler: the
set of stationary states is discrete and one can take advantage of
 existence of a Lyapunov functional to prove that any bounded
global solution will converge to an equilibrium (see e.g.
\cite{M78,Ze}).  In the case of higher spatial dimension, it was
proved in \cite{S83,J981} by use of the so-called
{\L}ojasiewicz-Simon inequality that any bounded global
 solution will
 converge to a single stationary state provided
that $f$ is real analytic. This kind of result is highly
nontrivial, since in higher spatial dimension the stationary
states can form a continuum (see, e.g., \cite{Ha}). Moreover, a
counterexample for semilinear parabolic equations with $C^\infty$
nonlinearities are given in the literature (see \cite{PS},  also
\cite{PR96}) to show that there is
 a bounded global solution whose $\omega$-limit set is
diffeomorphic to the unit circle $S^1$.  After this breakthrough,
many further contributions were made  for various types of
nonlinear evolution equations (see, e.g.,
\cite{AFI,FIP1,FS,HM,Hu,J982,RH,WZ1,WZ2,WGZ,WGZ2} and references
therein).

Some new features of our present work are as follows. First, due to
the presence of dynamical term $u_t$, it turns out that for the
corresponding elliptic operator, the dissipative boundary condition
should be considered as  non-homogeneous. As a result, the \L
ojasiewicz-Simon type inequality we are going to derive is naturally
different from the one with the usual homogeneous boundary
conditions in the literature. Furthermore, we are able to treat both
non-homogenous Neumann and Robin boundary conditions (corresponding
to $\mu=0,1$, respectively). Second, the subcritical growth of
nonlinearity seems natural as stated in Remark 1.1. Besides, we do
not have any restriction on spatial dimension. Thus, our result
improves the growth condition of nonlinear term stated in
\cite[Chapter 3, Section 3.2]{Hu} even for the Dirichlet boundary
condition.

The remaining  part of this paper is organized as follows. In
Section 2 we prove the existence and uniqueness of the global
solution (Theorem \ref{glo}) and derive some uniform \textit{a
priori} estimates. In Section 3 we first study the stationary
problem. Then we proceed to establish the generalized \L
ojasiewicz--Simon type inequality, and to complete the proof of
Theorem \ref{con2}.


\section{Global Existence and Uniqueness}
\setcounter{equation}{0}

\par
Let \bea E&:=&\{(u,v)^T\in H^2(\Omega)\times
H^{\frac{3}{2}}(\Gamma):\
\gamma(u)=v\};\non\\
E_1&:=&\{(u,v)^T\in H^1(\Omega)\times H^{\frac{1}{2}}(\Gamma):\
\gamma(u)=v\};\non\\
F&:=&L^2(\Omega)\times H^{\frac{1}{2}}(\Gamma).\non\eea Following
\cite{E2,E3,E4,FQ1} we can rewrite problem \eqref{1.1}-\eqref{1.3}
in the following form:

\begin{equation}
    \left\{\begin{array}{l}  \frac{d}{dt}\left(
  \begin{array}{c}
    u\\v
 \end{array}\right)
 +A\left(
   \begin{array}{c}
    u\\v
   \end{array}\right)
  +\left(
   \begin{array}{c}
    f(u)\\0
   \end{array}\right)=0, \\
    (u(0),v(0))^T=(u_0,\gamma(u_0))^T,\\
    \end{array}
    \right.\label{2.1}
 \end{equation}
 with
\begin{eqnarray}
 A\left(
  \begin{array}{c}
    u\\v
 \end{array}\right)
 =\left(
   \begin{array}{c}
    -\Delta u\\ \mu v+ \gamma(\partial_\nu u)   \end{array}\right),\non
  \end{eqnarray}
and $E$ is the domain of $A$, i.e., $D(A)=E$.

By the results in Escher \cite{E4} and Fila $\&$ Quittner
\cite{FQ1}, we have
\begin{mytheorem}
Let $u_0\in H^1(\Omega)$ and assumptions
(\textbf{F1})(\textbf{F2}) be satisfied. Then problem
(\ref{1.1})--(\ref{1.3}) admits a unique maximal solution $u(t)$
such that
$$Z:=(u,\gamma(u))^T\in C([0,T_{max});E_1)\cap C((0,T_{max});E)
\cap C^1((0,T_{max});F).$$ Moreover, the solution $u(t)$ exists
globally if $u(t)$ is bounded in $H^1(\Omega)$. In addition, if
$u_0\in H^2(\Omega)$, then $Z\in C([0,T_{max});E)\cap
C^1([0,T_{max});F)$.
\end{mytheorem}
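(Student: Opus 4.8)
The plan is to recast \eqref{2.1} as an abstract semilinear Cauchy problem $\dot Z+AZ+N(Z)=0$, $Z(0)=Z_0$, on the Hilbert space $F=L^2(\Omega)\times H^{\frac{1}{2}}(\Gamma)$, where $Z=(u,v)^T$, $Z_0=(u_0,\gamma(u_0))^T$ and $N(Z)=(f(u),0)^T$, and then to invoke the semigroup machinery of Escher \cite{E2,E3,E4} and Fila--Quittner \cite{FQ1}. The first ingredient is the linear one: $-A$ with domain $D(A)=E$ generates an analytic semigroup $\{e^{-At}\}_{t\geq 0}$ on $F$. This is the content of \cite{E3,E4}: one removes $u$ from the boundary equation by means of a Dirichlet solution operator, thereby reducing the resolvent equation for $A$ to an elliptic pseudodifferential equation on $\Gamma$, and checks the sectorial estimate. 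A byproduct is the identification of the interpolation scale $E_\theta:=[F,D(A)]_\theta$, $\theta\in(0,1)$; in particular $E_{1/2}=E_1$, and, thanks to the compatibility condition $\gamma(u)=v$ together with the trace theorem, $\|Z\|_{E_1}$ is equivalent to $\|u\|_{H^1(\Omega)}$.

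The substantive step is the nonlinear estimate: for some admissible $\theta\in[\frac{1}{2},1)$, the map $N$ is locally Lipschitz from $E_\theta$ into $F$. Since its second component vanishes, this reduces to bounding $\|f(u_1)-f(u_2)\|_{L^2(\Omega)}$. From the identity $f(u_1)-f(u_2)=\big(\int_0^1 f'(\tau u_1+(1-\tau)u_2)\,d\tau\big)(u_1-u_2)$ and (\textbf{F2}) one gets $|f(u_1)-f(u_2)|\leq c\,(1+|u_1|^{p}+|u_2|^{p})\,|u_1-u_2|$, so by H\"older's inequality $\|f(u_1)-f(u_2)\|_{L^2(\Omega)}$ is controlled by $\|u_1-u_2\|_{L^{2(p+1)}(\Omega)}$ times a polynomial in $\|u_1\|_{L^{2(p+1)}(\Omega)}$, $\|u_2\|_{L^{2(p+1)}(\Omega)}$. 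Hence it suffices that the first component of $E_\theta$, namely $H^{2\theta}(\Omega)$, embed into $L^{2(p+1)}(\Omega)$, which by the Sobolev imbedding holds once $\theta\geq\frac{np}{4(p+1)}$. The subcritical condition (\textbf{F2}), i.e. $p<\frac{4}{n-2}$ for $n\geq 3$ and $p$ arbitrary for $n\leq 2$, makes this lower bound strictly smaller than $\frac{n}{n+2}<1$, so an admissible $\theta<1$ exists; and the same condition is exactly what makes the relevant smoothing exponents integrable, since with the smoothing bound $\|e^{-At}\|_{\mathcal{L}(E_{1/2},E_\theta)}\leq C\,t^{-(\theta-1/2)}$ the Duhamel map $Z\mapsto e^{-At}Z_0-\int_0^t e^{-A(t-s)}N(Z(s))\,ds$ is a contraction on a small ball of $C([0,T];E_1)$ (in a weighted norm controlling $t^{\theta-1/2}\|Z(t)\|_{E_\theta}$) for $T$ small. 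This fixed-point scheme for energy-subcritical nonlinearities is the only genuinely delicate point; it is carried out in \cite{FQ1}, and I would reproduce it here with only minor modifications forced by the non-homogeneity of the boundary term.

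Granting these two ingredients, the abstract local existence theorem for analytic semigroups (cf. \cite{E4,FQ1}) furnishes, for every $u_0\in H^1(\Omega)$, a unique maximal mild solution $Z\in C([0,T_{max});E_1)$, and the standard parabolic (analytic) smoothing then upgrades it to $Z\in C((0,T_{max});E)\cap C^1((0,T_{max});F)$; uniqueness on each finite subinterval follows from a Gronwall argument on the difference of two solutions, using the local Lipschitz estimate above. The abstract continuation alternative asserts that if $T_{max}<+\infty$ then $\|Z(t)\|_{E_1}\to+\infty$ as $t\uparrow T_{max}$; since $\|Z(t)\|_{E_1}$ is equivalent to $\|u(t)\|_{H^1(\Omega)}$, any solution that remains bounded in $H^1(\Omega)$ must be global. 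Finally, if $u_0\in H^2(\Omega)$ then $Z_0=(u_0,\gamma(u_0))^T\in E=D(A)$, and the same abstract theorem, now with initial datum in the domain of the generator, yields continuity up to $t=0$, that is $Z\in C([0,T_{max});E)\cap C^1([0,T_{max});F)$. Apart from the nonlinearity estimate of the second paragraph, every step is a direct transcription of the arguments in \cite{E3,E4,FQ1}.
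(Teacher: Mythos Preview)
Your proposal is correct and follows essentially the same route as the paper: the paper's own proof simply invokes \cite[Theorem~1]{E4} (noting that (\textbf{F2}) gives Escher's condition $(W)$) for $Z\in C([0,T_{max});E_1)$ and \cite[Theorem~2.2]{FQ1} for the remaining regularity and the $H^2$ case, whereas you have unpacked those references and sketched the underlying analytic-semigroup and fixed-point argument. The content is the same; you have written out in more detail what the paper leaves to citation.
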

\begin{proof}
Let $u_0\in H^1(\Omega)$. By assumption (\textbf{F2}), the
condition $(W)$ in Escher \cite{E4} is satisfied. Thus $Z\in
C([0,T_{max});E_1)$ follows from \cite[Theorem 1]{E4}. On the
other hand, it follows from \cite[Theorem 2.2]{FQ1} that $Z\in
C((0,T_{max});E) \cap C^1((0,T_{max});F)$ and furthermore $Z\in
C([0,T_{max});E)\cap C^1([0,T_{max});F)$, if $u_0\in H^2(\Omega)$.
\end{proof}
\noindent Define
\begin{equation}
E(u(t))=\frac{1}{2}\int_\Omega|\nabla u|^2dx+
\frac{\mu}{2}\int_\Gamma u^2 dS+\int_\Omega F(u(t))dx
\end{equation}
where $F(z)=\int_0^z f(s)ds$. \\It's easy to see that $\forall \
t\in(0, T_{max})$,
\begin{equation}
\frac{d}{dt}E(u(t))+
\|u_t\|^2+\|u_t\|_{L^2(\Gamma)}^2=0,\label{2.3}
\end{equation}
which implies that $E(u)$ is decreasing respect to time.\\
 Next we
prove the global existence of the solution to problem
(\ref{1.1})--(\ref{1.3}).
\begin{mytheorem}\label{glo1}
Assume  (\textbf{F1})(\textbf{F2})(\textbf{F3}) hold. Then the
solution $u(t)$ obtained in Theorem 2.1 exists globally, i.e.,
$T_{max}=+\infty$. Furthermore, assuming in addition that $u_0\in
L^\frac{4n}{n-2}(\Omega)$, $\gamma(u_0)\in L^\frac{4n}{n-2}(\Gamma)$
when $n\geq 3$, then for any $\sigma>0$ we have the following
uniform estimate
$$\|u\|_{H^2(\Omega)}\leq C_\sigma,\qquad \forall \ t\geq \sigma,$$ where $C_\sigma$ is
a positive constant depending only on $\|u_0\|_{H^1(\Omega)}$,
$\|u_0\|_{L^{\frac{4n}{n-2}}(\Omega)}$,
$\|\gamma(u_0)\|_{L^{\frac{4n}{n-2}}(\Gamma)}$, $\sigma$ and
$|\Omega|$.
\end{mytheorem}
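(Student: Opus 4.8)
The plan is to argue global existence by exhibiting a coercive, dissipative energy estimate that bounds $u(t)$ in $H^1(\Omega)$ uniformly in time, and then to bootstrap from the $H^1$ bound up to the $H^2$ bound via parabolic smoothing. For the first part I would start from the energy identity \eqref{2.3}, which gives $E(u(t))\le E(u_0)$ for all $t$. The key is to show that $E(u)$ controls $\|u\|_{H^1(\Omega)}^2$ from below up to an additive constant. Assumption (\textbf{F3}) is exactly what makes this work: writing $\liminf_{|s|\to\infty}f(s)/s=:2\beta$ with $\beta>-\lambda/8$ (for $\mu=1$) respectively $\beta>0$ (for $\mu=0$), one gets $F(s)\ge \beta s^2 - C$ for all $s$ and some constant $C=C(\|f\|_{\mathrm{loc}})$. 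Plugging this into the definition of $E$ and using the Sobolev-type inequality $\int_\Omega|\nabla u|^2+\int_\Gamma u^2\ge\lambda\int_\Omega u^2$ from (\textbf{F3}) (in the case $\mu=0$ one uses instead that $\int_\Omega|\nabla u|^2$ plus the $L^2$-term dominates $\|u\|_{H^1}^2$ once $\beta>0$), one obtains
\be
E(u(t))\ \ge\ c_0\|u(t)\|_{H^1(\Omega)}^2 - C_1
\ee
for constants $c_0>0$, $C_1\ge 0$ independent of $t$. Hence $\|u(t)\|_{H^1(\Omega)}^2\le c_0^{-1}(E(u_0)+C_1)$ is bounded uniformly on $[0,T_{max})$, and Theorem 2.1 then forces $T_{max}=+\infty$.

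For the uniform $H^2$ estimate I would proceed in two stages. First, integrating \eqref{2.3} over $[t,t+1]$ and using the uniform $H^1$ bound gives $\int_t^{t+1}(\|u_t(s)\|^2+\|u_t(s)\|_{L^2(\Gamma)}^2)\,ds\le C$ uniformly in $t\ge 0$. Next, to upgrade this to a pointwise-in-time bound on $u_t$, I would differentiate the equation formally in time (working with difference quotients and the regularity $Z\in C((0,T_{max});E)\cap C^1((0,T_{max});F)$ from Theorem 2.1 to justify it), test the resulting equation for $w=u_t$ with $w$ itself, and use (\textbf{F2}) together with the $H^1$ bound and suitable Sobolev/trace embeddings — here the subcritical exponent $\alpha$ in (\textbf{F2}) and the extra integrability hypotheses $u_0\in L^{4n/(n-2)}(\Omega)$, $\gamma(u_0)\in L^{4n/(n-2)}(\Gamma)$ enter to control $\int_\Omega f'(u)u_t^2\,dx$. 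This yields a differential inequality of the form $\frac{d}{dt}\|u_t\|_F^2 \le g(t)\|u_t\|_F^2 + h(t)$ with $g,h$ integrable on unit intervals, so a uniform Gronwall lemma (combined with the $L^1_{\mathrm{loc}}$-in-time bound on $\|u_t\|_F^2$ from the previous step) gives $\|u_t(t)\|_F\le C_\sigma$ for all $t\ge\sigma$. Finally, reading \eqref{2.1} as the elliptic system $A(u,v)^T=-(f(u),0)^T-\partial_t(u,v)^T$ with right-hand side now bounded in $F=L^2(\Omega)\times H^{1/2}(\Gamma)$ uniformly for $t\ge\sigma$, elliptic regularity for the operator $A$ (whose domain is $E$) gives $\|u(t)\|_{H^2(\Omega)}\le C_\sigma$ as claimed, with the stated dependence of $C_\sigma$.

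The main obstacle I anticipate is the second stage: justifying the time-differentiation and, above all, controlling the term $\int_\Omega f'(u)\,u_t^2\,dx$ (and its boundary analogue if it appears) so that the Gronwall argument closes uniformly in $t$. This is precisely where the subcritical growth exponent in (\textbf{F2}) and the additional $L^{4n/(n-2)}$-integrability of the initial data must be used carefully — one needs an interpolation/embedding estimate of the form $\int_\Omega|f'(u)|u_t^2\le \epsilon\|u_t\|_{H^1}^2+C_\epsilon(1+\|u\|_{\ast})\|u_t\|^2$ with the $H^1$-norm of $u_t$ absorbed into the good terms coming from the principal part of the differentiated equation. A secondary technical point is the case distinction $\mu=0$ versus $\mu=1$ in establishing coercivity of $E$, since for $\mu=0$ the boundary term drops out of the energy and one must instead lean on the strict positivity of $\beta$ in (\textbf{F3}) together with a Poincaré-type inequality adapted to the dynamical boundary condition.
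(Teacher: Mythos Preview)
Your global-existence argument via coercivity of $E$ is correct and matches the paper's proof essentially line for line (the paper also splits into the two cases $\mu=1$ and $\mu=0$ and uses (\textbf{F3}) to get $E(u)\ge c_0\|u\|_{H^1}^2-C_1$).

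For the $H^2$ estimate, however, there are two genuine gaps.

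\textbf{First, and most importantly:} you never explain how the assumption $u_0\in L^{4n/(n-2)}(\Omega)$, $\gamma(u_0)\in L^{4n/(n-2)}(\Gamma)$ propagates to a \emph{uniform-in-time} bound on $\|u(t)\|_{L^{4n/(n-2)}}$. This is not automatic: by (\textbf{F2}) one has $|f'(u)|\le c(1+|u|^p)$ with $p$ allowed arbitrarily close to $\tfrac{4}{n-2}$, so bounding $\|f'(u)\|_{L^n}$ (which is what your Gronwall step needs, and what the elliptic step needs for $\|f(u)\|_{L^2}$) requires control of $\|u\|_{L^{q}}$ with $q$ close to $\tfrac{4n}{n-2}>\tfrac{2n}{n-2}$, strictly beyond what the $H^1$ bound gives via Sobolev. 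The paper supplies the missing step: multiply the equation by $|u|^{p-2}u$ with $p=\tfrac{4n}{n-2}$, use (\textbf{F3}) once more to produce a differential inequality for $\int_\Omega|u|^p+\int_\Gamma|u|^p$, and integrate to get a uniform $L^{4n/(n-2)}$ bound for all $t\ge 0$. Only after this does the estimate $\|f'(u(t))\|_{L^n}\le C$ hold uniformly, and your control of $\int_\Omega f'(u)u_t^2$ becomes legitimate.

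\textbf{Second:} testing the time-differentiated equation with $u_t$ gives a differential inequality for $\|u_t\|^2+\|u_t\|_{L^2(\Gamma)}^2$, not for $\|u_t\|_F^2=\|u_t\|^2+\|u_t\|_{H^{1/2}(\Gamma)}^2$. So after your uniform Gronwall step you only know $\gamma(u_t)\in L^2(\Gamma)$ uniformly, whereas the elliptic regularity for $A$ (equivalently, the estimate $\|u\|_{H^2}\le C(\|u_t+f(u)\|+\|u_t\|_{H^{1/2}(\Gamma)}+\|u\|)$) requires the boundary datum in $H^{1/2}(\Gamma)$. The paper closes this by a second energy estimate: test the differentiated equation with $-\Delta u_t$, use the $\|f'(u)\|_{L^n}$ bound from the first step, and obtain (with the $t^2$-weighting trick to handle the initial time) a uniform bound on $\|u_t\|_{H^1(\Omega)}$ for $t\ge\sigma$, hence on $\|\gamma(u_t)\|_{H^{1/2}(\Gamma)}$ by trace. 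Your outline omits this second test entirely.

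In short, your overall architecture (energy $\Rightarrow$ $H^1$; differentiate $\Rightarrow$ bound $u_t$; elliptic regularity $\Rightarrow$ $H^2$) is the same as the paper's, but the two missing ingredients --- the $L^{4n/(n-2)}$ propagation and the $-\Delta u_t$ test --- are substantive, not cosmetic.
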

\begin{proof}
 By Theorem 2.1, in order to get the global
existence, it suffices to obtain uniform \textit{a priori}
estimates on $\|u\|_{H^1(\Omega)}$.\\
From the Sobolev imbedding theorem and the growth assumption
(\textbf{F2}), we can get
\begin{eqnarray}
\int_\Omega F(u)dx & =& \int_\Omega \int_0^u f(s)ds\nonumber\\
& \leq & C\int_\Omega \int_0^{|u|}(1+|s|^{p+1})dsdx\leq
C\int_\Omega(|u|+|u|^{p+2})dx\nonumber\\
& \leq & C(\|u\|_{L^1(\Omega)}+\|u\|^{p+2}_{L^{p+2}(\Omega)})\leq
C(\|u\|_{H^1(\Omega)}).\label{2.5}
\end{eqnarray}
This implies \be E(u(t))\leq C(\|u\|_{H^1(\Omega)}).\label{2.6}\ee
In the same way as in \cite{CEL}, we can also get an estimate in
the opposite direction. \\To see this, for sufficiently small
$\delta>0$, we consider two cases.

\noindent \textbf{Case 1}: $\mu=1$.\\ From the assumption
(\textbf{F3}), there exists $N=N(\delta)$ being a positive
    constant such that $f(z)/z \geq -\lambda +2\delta $ for $|z|\geq N$.
    Then we have \be F(s) =\int_0^N\frac{f(z)}{z} z
    dz + \int_N^s \frac{f(z)}{z} z dz \geq -\frac{\lambda-\delta}{2}s^2\ee
    for $|s|\geq \left(\frac{1}{\delta}(N^2-2C)\right)^{1/2}:=M$, where $C=\int_0^N\frac{f(z)}{z} z
    dz$.  \\For
    negative $s$ one can repeat the same computation with $N$ replaced
    by $-N$. Now we have \be \int_\Omega F(u) dx= \int_{|u|\leq M}
    F(u)dx + \int_{|u|> M}F(u)dx \geq -\frac{\lambda-\delta}{2}\int_\Omega u^2dx+
    C(|\Omega|,f)\ee
    where $C(|\Omega|,f)=|\Omega|\displaystyle{\min_{|s|\leq M}}F(s)$.\\
  Thus  we can deduce that
     \be E(u(t))\geq \varepsilon\left(\frac{1}{2}\int_\Omega |\nabla u|^2 dx
    + \frac{1}{2}\int_\Gamma u^2 dS\right) +C(|\Omega|,f)\label{bl}\ee
    provided $\varepsilon\leq \frac{\delta}{2\lambda}$.
    Taking $C=1/\varepsilon$, we get
\be \frac{1}{2}\|u\|^2_{H^1(\Omega)}=\frac{1}{2}\int_\Omega
|\nabla u|^2 dx
    + \frac{1}{2}\int_\Gamma u^2 dS\leq
    C(E(u(t))-C(|\Omega|,f)).\label{2.10}\ee
\noindent\textbf{Case 2}: $\mu=0$.\\
By assumption (\textbf{F3}), there exists $N=N(\delta)$ being a
positive
    constant such that $f(z)/z \geq 2\delta $ for $|z|\geq N$.
    Then we have \be F(s) =\int_0^N\frac{f(z)}{z} z
    dz + \int_N^s \frac{f(z)}{z} z dz \geq \frac{\delta}{2}s^2\ee
    for $s^2\geq \frac{1}{\delta}(N^2-2C)$.  Similar to Case 1, we have \be \int_\Omega F(u) dx= \int_{|u|\leq M}
    F(u)dx + \int_{|u|> M}F(u)dx \geq \frac{\delta}{2}\int_\Omega u^2dx+
    C(|\Omega|,f)\ee
    where $C(|\Omega|,f)=|\Omega|\displaystyle{\min_{|s|\leq M}}F(s)-\frac{\delta}{2}M^2|\Omega|$.\\
   Thus we can deduce that
     \be E(u(t))\geq \left(\frac{1}{2}\int_\Omega |\nabla u|^2 dx
    + \frac{\delta}{2}\int_\Omega u^2 dx\right) +C(|\Omega|,f)\label{bla}\ee
    Taking $C=1/\delta$, we get
\be \frac{1}{2}\|u\|^2_{H^1(\Omega)}=\frac{1}{2}\int_\Omega |\nabla
u|^2 dx
    + \frac{1}{2}\int_\Omega u^2 dx\leq
    C(E(u(t))-C(|\Omega|,f)).\label{2.10a}\ee
For both cases, we have  \bea \|u\|^2_{H^1(\Omega)} & \leq &
C(E(u(t))-C(|\Omega|,f))\leq
C(E(u_0)-C(|\Omega|,f))\nonumber\\
& \leq& C(\|u_0\|_{H^1(\Omega)})-C(|\Omega|,f) \non\eea This
implies the following uniform estimate: \be
\|u\|_{H^1(\Omega)}\leq M, \qquad \forall t\in(0,T_{max})\ee where
M does not depend on $T_{max}$. According to Theorem 2.1, this
leads to $T_{max}=+\infty$, i.e., the solution exists globally.

In order to get the uniform bound of $u(t)$ in $H^2(\Omega)$ norm,
we are going to get some higher-order estimates via a formal
argument. However, this procedure can be made rigorously within an
appropriate regularization scheme (e.g. \cite{Z1}, Chapter 6).

In what follows we only consider the case $n\geq 3$ since the
cases $n=1,2$ we can get the same result with a simpler proof (see
Remark 2.1).

Again we consider two cases.\\
\textbf{Case 1} $\mu=1$. Multiplying (\ref{1.1}) by $|u|^{p-2}u$,
integrating over $\Omega$, we get
\be\frac{1}{p}\frac{d}{dt}\left(\int_\Omega |u|^{p}dx+\int_\Gamma
|u|^{p}dS\right)+(p-1)\int_\Omega |u|^{p-2}|\nabla
u|^2dx+\int_\Omega
f(u)u|u|^{p-2}dx+\int_\Gamma |u|^{p}dS = 0\ee %
Taking $p=\frac{4n}{n-2}$, from assumption (\textbf{F3}), we can
deduce that for $\delta>0$ sufficiently small there exists
$N=N(\delta)>0$ such that
$$ \frac{f(u)}{u}\geq \left(-\frac{4(p-1)}{p^2}\lambda+\delta\lambda\right)
\qquad \text{for} \ |u|\geq N,\ n\geq 3,$$
and as a result \be f(u)u\geq
\left(-\frac{4(p-1)}{p^2}\lambda+\delta\lambda\right)u^2\qquad
\text{for} \ |u|\geq N.\label{eq}\ee \eqref{eq} implies that \be
\int_\Omega f(u)u|u|^{p-2}dx\geq C_f+
\left(-\frac{4(p-1)}{p^2}\lambda+\delta\lambda\right)\int_{|u|\geq
N} |u|^pdx\ee
    where $C_f=|\Omega|\displaystyle{\min_{|s|\leq
    N}}f(s)s|s|^{p-2}$.
Then we get \bea&&\frac{1}{p}\frac{d}{dt}\left(\int_\Omega |u|^p
dx+\int_\Gamma |u|^p dS\right)+\frac{4(p-1)}{p^2}\int_\Omega
|\nabla |u|^{\frac{p}{2}}|^2dx+\int_\Gamma |u|^p dS \non\\ &\leq&
\left(\frac{4(p-1)}{p^2}\lambda-\delta\lambda\right) \int_\Omega
|u|^p dx +C.\label{2.26}\eea Using assumption (\textbf{F3}) and
applying the imbedding inequality \be \int_\Omega |\nabla
  w|^2 dx + \int_\Gamma w^2 dS \geq \lambda \int_\Omega w^2 dx
  \nonumber \ee
 to $|u|^{p/2}$, we get \be \frac{d}{dt}\left(\int_\Omega
|u|^p dx+\int_\Gamma |u|^p dS\right)+C_1\left( \int_\Omega |u|^p
dx+\int_\Gamma
|u|^p dS\right) \leq C_2.\ee \\
Integrating with respect to $t$ yields that \be \int_\Omega
|u|^{p}dx+\int_\Gamma |u|^{p}dS\leq e^{-C_1t}\left(\int_\Omega
|u_0|^{p}dx+\int_\Gamma
|u_0|^{p}dS\right)+\frac{C_2}{C_1}.\label{ppp}\ee Thus, we get \be
\int_\Omega |u|^{\frac{4n}{n-2}}dx\leq C\label{2.30a}\ee which
implies \be\|f'(u)\|_{L^n(\Omega)}\leq C.\label{2.30}\ee Here $C$
is a constant depending on $\|u_0\|_{L^{\frac{4n}{n-2}}(\Omega)}$,
$\|\gamma(u_0)\|_{L^{\frac{4n}{n-2}}(\Gamma)}$ and $|\Omega|$.\\
Differentiating (\ref{1.1}) with respect to $t$, we get \be
u_{tt}-\Delta u_{t}+ f'(u)u_t=0.\label{d1}\ee Multiplying
\eqref{d1} by $u_t$ and integrating over $\Omega$, we obtain
\be\frac{1}{2}\frac{d}{dt}(\parallel u_t\parallel^2+\parallel
u_t\parallel^2_{L^2(\Gamma)})+\parallel \nabla u_t\parallel^2+
\|u_t\|_{L^2(\Gamma)}^2+\int_\Omega f'(u) u_t^2dx=0.\label{ut1}\ee
It follows from H\"{o}lder's inequality and the Gagliado-Nirenberg
inequality that
\bea &&\left|\int_\Omega f'(u)u_t^2dx\right|\leq \|f'(u)\|_{L^n}\|u_t\|^2_{L^\frac{2n}{n-1}}\non\\
&\leq& C  \|f'(u)\|_{L^n}( \|\nabla u_t\|\|u_t\|+\|u_t\|^2)\non\\
&\leq& \frac{1}{2} \|\nabla u_t\|^2 +C\|u_t\|^2 .\label{ettt}\eea
Hence, we have
\begin{eqnarray}
& &t\left(\parallel u_t\parallel^2+\parallel
u_t\parallel^2_{L^2(\Gamma)}\right)+\int_0^t\tau \left(\parallel
\nabla
u_t\parallel^2+\parallel u_t\parallel^2_{L^2(\Gamma)}\right)d\tau\nonumber\\
&\leq& C\int_0^t\tau \|u_t\|^2d\tau+
\int_0^t\left(\|u_t\|^2+\parallel
u_t\parallel^2_{L^2(\Gamma)}\right)d\tau.\label{aa}
\end{eqnarray}
From \eqref{2.3} and \eqref{2.6} we can conclude that for any $t>0$
\be t\left(\parallel u_t\parallel^2+\parallel
u_t\parallel^2_{L^2(\Gamma)}\right)\leq C(1+t),\ \
\int_0^t\tau\left(\parallel \nabla u_t\parallel^2+\parallel
u_t\parallel^2_{L^2(\Gamma)}\right)d\tau\leq C(1+t).\label{2.18}\ee
Multiplying (\ref{d1}) by $-\Delta u_t$ and integrating over
$\Omega$, then from the boundary condition and H\"{o}lder's
inequality we get
\begin{eqnarray}
 & &\frac{1}{2}\frac{d}{dt}\left(\parallel
\nabla u_t\parallel^2+\parallel
u_t\parallel^2_{L^2(\Gamma)}\right)+\parallel
u_{tt}\parallel^2_{L^2(\Gamma)}+
\parallel\Delta u_t\parallel^2\nonumber\\
& \leq & \parallel f'(u)\parallel_{L^n(\Omega)}\ \parallel
u_t\parallel_{L^{\frac{2n}{n-2}}(\Omega)}\parallel \Delta
u_t\parallel.\label{2.21}
\end{eqnarray}
By (\ref{2.30}) and the Sobolev imbedding theorem, (\ref{2.21})
yields that\be \frac{1}{2}\frac{d}{dt}\left(\| \nabla u_t\|^2+ \|
u_t\|^2_{L^2(\Gamma)}\right)+ \|\Delta
u_t\|^2+\|u_{tt}\|^2_{L^2(\Gamma)}
 \leq \frac{1}{2}\parallel \Delta u_t\parallel^2+C\parallel
u_t\parallel_{H^1(\Omega)}^2\label{2.31} \ee Multiplying
(\ref{2.31}) by $t^2$ and integrating from 0 to $t$,
\begin{eqnarray}
& &t^2\left(\parallel\nabla u_t\parallel^2 + \parallel
u_t\parallel^2_{L^2(\Gamma)}\right) + \int_0^t \tau^2\parallel
\Delta u_t\parallel^2d\tau
+2\int_0^t \tau^2\|u_{tt}\|^2_{L^2(\Gamma)}d\tau\non\\
&\leq&\int_0^t \tau \parallel \nabla u_t
\parallel^2 d\tau + \int_0^t \tau\parallel
u_t\parallel^2_{L^2(\Gamma)} d\tau + 2C \int_0^t \tau^2
\parallel u_t\parallel_{H^1(\Omega)}^2 d\tau.\label{2.32}
\end{eqnarray}
Combining it with  (\ref{2.18}) yields that  \be t^2\parallel
u_t\parallel^2_{H^1(\Omega)}\ \leq\ C(1+t^2),\qquad \forall \
t>0.\label{2.34}\ee \textbf{Case 2} $\mu=0$.\\ Similarly we have
\be\frac{1}{p}\frac{d}{dt}\left(\int_\Omega |u|^{p}dx+\int_\Gamma
|u|^{p}dS\right)+(p-1)\int_\Omega |u|^{p-2}|\nabla
u|^2dx+\int_\Omega f(u)u|u|^{p-2}dx = 0,\ee where
$p=\frac{4n}{n-2}$. From (\textbf{F3}), for $\delta>0$ small there
exists $N=N(\delta)>0$ such that
 \be f(u)u\geq \delta u^2\qquad \text{for} \ |u|\geq
N,\ee which implies \be \int_\Omega f(u)u|u|^{p-2}dx\geq C_f+\delta
\int_{|u|\geq N} |u|^pdx\geq (C_f-\delta N^p|\Omega|)+\delta
\int_\Omega |u|^pdx,\ee
    where $C_f=|\Omega|\displaystyle{\min_{|s|\leq
    N}}f(s)s|s|^{p-2}$.
Then we get \be\frac{d}{dt}\left(\int_\Omega |u|^p dx+\int_\Gamma
|u|^p dS\right)+C\int_\Omega \left(|\nabla |u|^{\frac{p}{2}}|^2
+|u|^p \right)dx \leq C.\label{2.26a}\ee This immediately gives
\eqref{2.30}. \\
Furthermore we have the following corresponding inequalities
 to \eqref{aa} \eqref{2.32}:
\begin{eqnarray}
& &t\left(\parallel u_t\parallel^2+\parallel
u_t\parallel^2_{L^2(\Gamma)}\right)+\int_0^t\tau \parallel \nabla
u_t\parallel^2d\tau\nonumber\\
&\leq& C\int_0^t\tau \|u_t\|^2d\tau+
\int_0^t\left(\|u_t\|^2+\parallel
u_t\parallel^2_{L^2(\Gamma)}\right)d\tau.\label{aaa}
\end{eqnarray}
\begin{eqnarray}
& &t^2\parallel\nabla u_t\parallel^2  + \int_0^t \tau^2\parallel
\Delta u_t\parallel^2d\tau
+2\int_0^t \tau^2\|u_{tt}\|^2_{L^2(\Gamma)}d\tau\non\\
&\leq&2\int_0^t \tau \parallel \nabla u_t
\parallel^2 d\tau + 2C \int_0^t \tau^2
\parallel u_t\parallel_{H^1(\Omega)}^2 d\tau.\label{2.32a}
\end{eqnarray}
By using the equivalent $H^1(\Omega)$ norm of $u_t$, it is easy to
see that \eqref{2.34} holds as well. \\Now for both cases, we
consider the following elliptic boundary value problem
\begin{equation}\left\{
\begin{array}{c}
\Delta u=u_t+f(u),\\
\partial_\nu u+\mu u+u_t\mid_\Gamma=0,
\end{array}
\right.\label{ell}\end{equation} according to the elliptic
regularity theory and the trace theorem we have \be \|
u\|_{H^2(\Omega)}\leq
C(\|u_t+f(u)\|+\|u_t\|_{H^\frac{1}{2}(\Gamma)}+\|u\|) \leq
C(\|u_t\|_{H^1(\Omega)}+\|f(u)\|+\|u\|), \label{2.22}\ee where $C$
is
certain positive constant depending only on $\Omega$.\\
Combining the fact
$$\frac{2(n+2)}{n-2}< \frac{4n}{n-2}\qquad  \text{for}\  \ n\geq 3  $$
with the result (\ref{2.30a}), assumption (\textbf{F2}) and
Young's inequality yields that \be\parallel f(u)\parallel\ \leq\
C.\label{2.27}\ee
 By (\ref{2.34}), (\ref{2.22}), (\ref{2.27}), for any $\sigma>0$, we obtain \be\| u\|_{H^2(\Omega)}\ \leq C_\sigma,\qquad \forall \ t \geq
\sigma\label{bh2}\ee where $C_\sigma$
 is a constant depending on $\|u_0\|_{H^1(\Omega)}$,
$\|u_0\|_{L^{\frac{4n}{n-2}}(\Omega)}$,
$\|\gamma(u_0)\|_{L^{\frac{4n}{n-2}}(\Gamma)}$, $\sigma$ and
$|\Omega|$.

\end{proof}
\br  We consider the case $n\geq 3$ in the proof when we want to
get the estimate of $\|f(u)\|$ and also $\|f'(u)\|_{L^n(\Omega)}$.
For $n=1,2$, the proof is simpler because we can simply use
$\|u_0\|_{H^1(\Omega)}$ to get the desired estimates due to the
Sobolev imbedding inequality for $n=1,2$.\er %

\section{Proof of Theorem \ref{con2} }
\setcounter{equation}{0}
The proof of Theorem \ref{con2} consists of several steps.\\
\noindent \textbf{Step 1.}
\par From \eqref{2.3} it is clear that the energy $E(u(t))$ is decreasing with respect to
time. On the other hand, \eqref{bl} or \eqref{bla} implies that
$E(u(t))$ is bounded from below. Thus we know that $E(u(t))$
serves as a Lyapunov functional. Besides,
  $\forall\ t>0$ , if \ $E(u)\equiv E(u_0)$ then
$\frac{dE}{dt}\equiv 0$. This enables us to deduce that $u_t\equiv
0$, which means $u$ is an equilibrium. The $\omega$-limit set of
$u_0 \in H^1(\Omega)$ is defined as follows:
$$\omega(u_0)\!=\! \{\psi(x)\in\! H^1(\Omega):\ \exists \  \{t_n\}_{n=1}^\infty,\ t_n\rightarrow +\infty  \ s.\ t.\
u(x,t_n)\rightarrow \psi(x)\ \text{in}\ H^1(\Omega)\}.$$ Then from
the well-known results in the dynamic system (e.g.,\cite{TE}, Lemma
I.1.1) it is easy to see that \begin{mylemma} The $\omega$-limit set
of $u_0$ is a non-empty compact connected subset in $H^1(\Omega)$.
Furthermore, (i) it is invariant under the nonlinear semigroup
$S(t)$ defined by the solution $u(x,t)$, i.e, $S(t)\omega(u_0) =
\omega(u_0)$ for all $ t \geq 0$.  (ii) $E(u)$ is constant on
$\omega(u_0)$. Moreover, $\omega(u_0)$ consists of
equilibria.\end{mylemma}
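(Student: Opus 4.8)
The plan is to read this statement off the abstract theory of gradient-like semiflows, using only the a priori information already established; I do not expect a genuine difficulty here, since the whole content sits in Theorem \ref{glo1} (precompactness of the orbit) and the dissipation identity \eqref{2.3} (strict Lyapunov structure). Precompactness is immediate from Theorem \ref{glo1}: for $t\ge 1$ the solution is uniformly bounded in $H^2(\Omega)$, and since $H^2(\Omega)\hookrightarrow H^1(\Omega)$ is compact (Rellich--Kondrachov), $\cK:=\overline{\{u(t):t\ge 1\}}$ is a compact subset of $H^1(\Omega)$. Hence for any $t_n\to+\infty$ some subsequence of $\{u(t_n)\}$ converges in $H^1(\Omega)$, so $\omega(u_0)\ne\emptyset$; and $\omega(u_0)=\bigcap_{s\ge 1}\overline{\{u(t):t\ge s\}}$ is an intersection of closed subsets of $\cK$, hence compact. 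For the Lyapunov part I would record three facts: $E$ is continuous on $H^1(\Omega)$ — the quadratic terms trivially, and $u\mapsto\int_\Omega F(u)\,dx$ by the subcritical growth (\textbf{F2}) together with the Sobolev embedding, exactly the estimate in \eqref{2.5}; $E(u(t))$ is nonincreasing by \eqref{2.3} and bounded from below by \eqref{bl}/\eqref{bla}; and the solution map $S(t):H^1(\Omega)\to H^1(\Omega)$ is continuous for each fixed $t\ge0$ and has the semiflow property (part of the local well-posedness theory, cf. \cite{E3,E4,FQ1}).

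For connectedness I would argue by contradiction in the standard way. If $\omega(u_0)=K_1\cup K_2$ with $K_1,K_2$ nonempty compact and $3d:=\mathrm{dist}(K_1,K_2)>0$, then, choosing $\psi_i\in K_i$ and times along which the orbit approaches $\psi_i$, there are $s_n,\tau_n\to+\infty$ with $\mathrm{dist}(u(s_n),K_1)<d$ and $\mathrm{dist}(u(\tau_n),K_1)>2d$. Since $t\mapsto u(t)$ is continuous into $H^1(\Omega)$ and $v\mapsto\mathrm{dist}(v,K_1)$ is $1$-Lipschitz, the intermediate value theorem gives $r_n\to+\infty$ with $\mathrm{dist}(u(r_n),K_1)=2d$; the triangle inequality then forces $\mathrm{dist}(u(r_n),K_2)\ge d$. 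By precompactness a subsequence of $\{u(r_n)\}$ converges in $H^1(\Omega)$ to some $\phi\in\omega(u_0)$ with $\mathrm{dist}(\phi,K_i)\ge d$ for $i=1,2$, contradicting $\phi\in K_1\cup K_2$. (Alternatively one may simply invoke \cite{TE}, Lemma I.1.1.)

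For the invariance $S(t)\omega(u_0)=\omega(u_0)$: let $\psi\in\omega(u_0)$ with $u(t_n)\to\psi$ in $H^1(\Omega)$ and fix $t\ge0$. Continuity of $S(t)$ gives $u(t+t_n)=S(t)u(t_n)\to S(t)\psi$, and $t+t_n\to+\infty$, so $S(t)\psi\in\omega(u_0)$; thus $S(t)\omega(u_0)\subseteq\omega(u_0)$. For the reverse inclusion, for $n$ large enough that $t_n-t\ge 1$ the points $u(t_n-t)$ lie in $\cK$, so along a subsequence $u(t_{n_k}-t)\to\phi$ in $H^1(\Omega)$ with $\phi\in\omega(u_0)$; then $S(t)\phi=\lim_k S(t)u(t_{n_k}-t)=\lim_k u(t_{n_k})=\psi$, so $\psi\in S(t)\omega(u_0)$. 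Hence $S(t)\omega(u_0)=\omega(u_0)$.

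Finally, constancy of $E$ and the equilibrium property. Since $E(u(t))$ is nonincreasing and bounded below, $E(u(t))\to E_\infty$ as $t\to+\infty$; if $\psi=\lim_n u(t_n)$ in $H^1(\Omega)$, continuity of $E$ gives $E(\psi)=\lim_n E(u(t_n))=E_\infty$, so $E\equiv E_\infty$ on $\omega(u_0)$. For the last assertion, fix $\psi\in\omega(u_0)$ and set $w(t):=S(t)\psi$; by invariance $w(t)\in\omega(u_0)$ for all $t\ge0$, so $E(w(t))\equiv E_\infty$, and integrating the energy identity \eqref{2.3} for $w$ over $[0,t]$ yields $\int_0^t(\|w_t\|^2+\|w_t\|_{L^2(\Gamma)}^2)\,d\tau=0$, whence $w_t\equiv0$ and $w\equiv\psi$. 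Therefore $\psi$ is a strong solution of the stationary problem \eqref{1.6}, i.e.\ $\omega(u_0)$ consists of equilibria. The only points that require a word of justification are the $H^1$-continuity of $E$ and of $S(t)$, both already implicit in (\textbf{F2}) and the cited local theory; everything else is the routine LaSalle argument.
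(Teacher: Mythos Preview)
Your proof is correct and follows exactly the route the paper takes: it uses the $H^2$ compactness from Theorem \ref{glo1} together with the strict Lyapunov structure \eqref{2.3}, \eqref{bl}/\eqref{bla} to feed the standard LaSalle argument, which the paper simply cites as \cite[Lemma I.1.1]{TE} without spelling out. You have unpacked that citation faithfully; the only cosmetic point is that \eqref{2.3} is stated for $t>0$, so the integration for $w(t)=S(t)\psi$ should strictly run over $[\epsilon,t]$ and then let $\epsilon\downarrow 0$, which is harmless by the $H^1$-continuity of $E$.
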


\noindent \textbf{Step 2.} In this step, we collect some results on
the stationary problem.
\begin{mylemma}
Suppose that $\psi \in H^2(\Omega)$ is a strong solution to
problem (\ref{1.6}). Then $\psi$ is a critical point of the
functional $E(u)$ in $H^1(\Omega)$. Conversely, if $\psi$ is a
critical point of the functional $E(u)$ in $H^1(\Omega)$, then
$\psi \in H^2(\Omega)$, and it is a strong solution to problem
(\ref{1.6}).
\end{mylemma}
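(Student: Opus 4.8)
The plan is to read this lemma as the standard identification of critical points of $E$ with weak solutions of (\ref{1.6}), combined with an elliptic regularity statement upgrading a weak solution in $H^1(\Omega)$ to a strong solution in $H^2(\Omega)$. First I would record that, by (\textbf{F2}), $|f(s)|\le c(1+|s|^{p+1})$ with $p+1<\frac{n+2}{n-2}$, so that the Sobolev embedding $H^1(\Omega)\hookrightarrow L^{2n/(n-2)}(\Omega)$ and the trace embedding make every term of $E$ finite on $H^1(\Omega)$ and render $E$ of class $C^1$ there, with
\[
\langle E'(u),v\rangle=\int_\Omega\nabla u\cdot\nabla v\,dx+\mu\int_\Gamma uv\,dS+\int_\Omega f(u)v\,dx,\qquad u,v\in H^1(\Omega).
\]
Hence $\psi$ is a critical point of $E$ exactly when it is a weak solution of (\ref{1.6}), and the lemma reduces to showing that weak solutions are precisely the $H^2$ strong solutions.

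For the first implication, assume $\psi\in H^2(\Omega)$ is a strong solution of (\ref{1.6}). For $v\in H^1(\Omega)$ I would multiply $-\Delta\psi+f(\psi)=0$ by $v$, integrate over $\Omega$, and apply Green's formula (legitimate since $\psi\in H^2(\Omega)$ gives $\partial_\nu\psi\in H^{1/2}(\Gamma)$ and $\gamma(v)\in H^{1/2}(\Gamma)\subset L^2(\Gamma)$), obtaining $\int_\Omega\nabla\psi\cdot\nabla v\,dx-\int_\Gamma\partial_\nu\psi\,v\,dS+\int_\Omega f(\psi)v\,dx=0$. Replacing $\partial_\nu\psi$ by $-\mu\psi$ via the boundary condition turns this into $\langle E'(\psi),v\rangle=0$ for all $v\in H^1(\Omega)$, so $\psi$ is a critical point of $E$.

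For the converse, let $\psi\in H^1(\Omega)$ satisfy $\langle E'(\psi),v\rangle=0$ for all $v\in H^1(\Omega)$. Testing with $v\in C_0^\infty(\Omega)$ kills the boundary term and yields $\Delta\psi=f(\psi)$ in $\mathcal D'(\Omega)$. I would then run an elliptic bootstrap: from $\psi\in L^{2n/(n-2)}(\Omega)$ and the growth bound, $f(\psi)\in L^{q_0}(\Omega)$ with $q_0=\frac{2n}{(n-2)(p+1)}>1$; $L^{q_0}$-regularity for the homogeneous Robin (resp.\ Neumann) problem — the natural boundary condition encoded in the variational identity — gives $\psi\in W^{2,q_0}(\Omega)$, hence better integrability of $\psi$ and of $f(\psi)$ by Sobolev embedding, and iterating this finitely many times (each step lowering the reciprocal exponent, the iteration closing precisely because $p<\frac{4}{n-2}$) leads to $f(\psi)\in L^2(\Omega)$ and thus $\psi\in H^2(\Omega)$; for $n=1,2$ no iteration is needed. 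With $\psi\in H^2(\Omega)$ in hand I would integrate by parts in $\langle E'(\psi),v\rangle=0$ to obtain $\int_\Omega(-\Delta\psi+f(\psi))v\,dx+\int_\Gamma(\partial_\nu\psi+\mu\psi)v\,dS=0$ for all $v\in H^1(\Omega)$; the interior integral already vanishes, and since the traces $\{\gamma(v):v\in H^1(\Omega)\}$ are dense in $L^2(\Gamma)$, it follows that $\partial_\nu\psi+\mu\psi=0$ on $\Gamma$, so $\psi$ is a strong solution of (\ref{1.6}).

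The step I expect to be the main obstacle is the regularity bootstrap in the converse direction: one must verify that the iteration of $L^q$-elliptic estimates with the Robin/Neumann boundary condition genuinely improves at each stage and terminates, which is exactly where the subcritical exponent in (\textbf{F2}) is used (and is the same mechanism underlying the variational construction of strong stationary solutions alluded to in Remark 1.1). The forward implication and the extraction of the boundary condition from the variational identity are then routine once the $H^2$ regularity is available.
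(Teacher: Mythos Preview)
Your proposal is correct and follows essentially the same approach as the paper: integration by parts plus the boundary condition for the forward implication, and the weak formulation together with a subcritical bootstrap for the converse. The paper's own proof is much terser, simply writing that ``by the assumption (\textbf{F2}) on subcritical growth and the bootstrap argument, $\psi\in H^2(\Omega)$ follows,'' whereas you have spelled out the iteration and the recovery of the boundary condition explicitly.
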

\begin{proof}
 If $\psi \in H^2(\Omega)$ satisfies
(\ref{1.6}), then for any $v \in H^1(\Omega)$, it follows from
(\ref{1.6}) that \be \int_{\Omega}(- \Delta \psi + f(\psi))vdx=0.
\non\ee By integration by parts and the boundary condition in
(\ref{1.6}), we get \be \int_{\Omega}(\nabla \psi \cdot \nabla v +
f(\psi) v)dx
    +\mu\int_{\Gamma}\psi v dS=0 \label{3.46}\ee
    which, by a straightforward calculation, is just
    the following
    \be
   \left. \frac{d E(\psi+\varepsilon v)}{d
    \varepsilon}\right|_{\varepsilon=0}=0.\non\ee
    Thus, $\psi$ is a critical point of $E(u)$. Conversely, if
    $\psi$ is a critical point of $E(u)$ in $H^1(\Omega)$, then
    (\ref{3.46}) is satisfied. By the assumption (\textbf{F2}) on subcritical growth
     and the bootstrap argument, $\psi\in H^2(\Omega)$ follows.
\end{proof}
    The following lemma claims that problem (\ref{1.6}) admits at
    least a strong solution.
\begin{mylemma}
The functional $E(u)$ has at least a
    minimizer $v\in H^1(\Omega)$ such that
    \be
    E(v)= \dis{\inf_{u\in H^1(\Omega)}}E(u).\non\ee
    In other words, problem (\ref{1.6}) admits at least a strong solution.
    \end{mylemma}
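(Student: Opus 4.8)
The plan is to prove existence of a minimizer by the direct method of the calculus of variations, so the main work is verifying that $E$ is bounded below and coercive on $H^1(\Omega)$ and that it is weakly lower semicontinuous. First I would recall the two-sided bounds on the nonlinear part already established in the proof of Theorem \ref{glo1}: the growth estimate \eqref{2.5} gives $\int_\Omega F(u)\,dx \leq C(\|u\|_{H^1(\Omega)})$, and more importantly the opposite-direction estimates \eqref{bl} (for $\mu=1$) and \eqref{bla} (for $\mu=0$) show that $E(u) \geq \varepsilon\big(\tfrac12\int_\Omega|\nabla u|^2\,dx + \tfrac12\int_\Gamma u^2\,dS\big) + C(|\Omega|,f)$ in the Robin case, and similarly with an interior $L^2$ term in the Neumann case. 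In both cases, using the equivalent norm $\|u\|_{H^1(\Omega)}^2 = \int_\Omega|\nabla u|^2\,dx + \int_\Gamma u^2\,dS$ (for $\mu=0$ one also invokes the imbedding inequality with $\lambda>0$ to control $\int_\Omega u^2\,dx$ by the $H^1$ norm, or simply observes that the functional controls $\int_\Omega|\nabla u|^2 + \int_\Omega u^2$, which is again an equivalent norm by a Poincar\'e-type inequality on the bounded domain $\Omega$), we obtain
\be
E(u) \geq c_1 \|u\|_{H^1(\Omega)}^2 - c_2, \qquad \forall\, u \in H^1(\Omega),\non
\ee
with $c_1>0$. This simultaneously shows $E$ is bounded below and coercive.

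Next I would take a minimizing sequence $\{u_k\} \subset H^1(\Omega)$ with $E(u_k) \to \inf_{u\in H^1(\Omega)} E(u) =: m > -\infty$. By coercivity, $\{u_k\}$ is bounded in $H^1(\Omega)$, so up to a subsequence $u_k \rightharpoonup v$ weakly in $H^1(\Omega)$. By the compactness of the Sobolev imbedding $H^1(\Omega) \hookrightarrow L^q(\Omega)$ for $q < \tfrac{2n}{n-2}$ (and the analogous compact trace imbedding $H^1(\Omega) \hookrightarrow L^q(\Gamma)$), together with the subcritical growth of $F$ implied by (\textbf{F2}) (namely $|F(s)| \leq C(1+|s|^{p+2})$ with $p+2 < \tfrac{2n}{n-2}$), one passes to the limit in $\int_\Omega F(u_k)\,dx \to \int_\Omega F(v)\,dx$; similarly $\int_\Gamma u_k^2\,dS \to \int_\Gamma v^2\,dS$ by compact trace imbedding. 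Finally, weak lower semicontinuity of the convex term $u \mapsto \tfrac12\int_\Omega |\nabla u|^2\,dx$ under weak $H^1$ convergence gives $\int_\Omega |\nabla v|^2\,dx \leq \liminf_k \int_\Omega |\nabla u_k|^2\,dx$. Combining these,
\be
E(v) \leq \liminf_{k\to\infty} E(u_k) = m,\non
\ee
and since $m \leq E(v)$ by definition of the infimum, $E(v) = m$, so $v$ is a minimizer.

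Then the passage from minimizer to strong solution is immediate from the preceding Lemma (the characterization of critical points of $E$): a minimizer is in particular a critical point of $E$ in $H^1(\Omega)$, hence by that lemma $v \in H^2(\Omega)$ and $v$ is a strong solution of \eqref{1.6}. I expect the main obstacle — really the only nonroutine point — to be making sure the subcriticality in (\textbf{F2}) is exploited at the correct exponent: one needs $p+2$ strictly below the critical Sobolev exponent $\tfrac{2n}{n-2}$ so that the imbedding used for the continuity of $u \mapsto \int_\Omega F(u)\,dx$ along the minimizing sequence is genuinely compact (for $n=1,2$ this is automatic, for $n\geq 3$ it follows since $p < \tfrac{4}{n-2}$ gives $p+2 < \tfrac{2n}{n-2}$ with room to spare). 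Everything else is a standard direct-method argument, and the boundedness-below and coercivity have in effect already been done inside the proof of Theorem \ref{glo1}.
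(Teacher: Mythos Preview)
Your proposal is correct and follows essentially the same approach as the paper: both apply the direct method of the calculus of variations, using the lower bounds \eqref{bl}/\eqref{bla} for coercivity and boundedness below, extracting a weakly convergent minimizing sequence, passing to the limit in $\int_\Omega F(u)\,dx$ via the compact subcritical Sobolev imbedding, and invoking weak lower semicontinuity of the quadratic part. The paper concludes by a direct bootstrap for the $H^2$ regularity, whereas you invoke the preceding lemma on critical points; this is a cosmetic difference.
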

\begin{proof}

From Section 2 we can see that $E(u)$ is bounded from below on
$H^1(\Omega)$. Therefore, there is a minimizing sequence $u_n\in
H^1(\Omega)$ such that
     \be E(u_n) \rightarrow \dis{\inf_{u\in H^1}}E(u).\label{3.3}\ee
On the other hand, $E(u)$ can be written in the form:
    \be
    E(u)=\frac{1}{2}\|\nabla u\|^2+\frac{\mu}{2}\|u\|_{L^2(\Gamma)}^2+\mathcal{F}(u)\ee
    with
    \be
    \mathcal{F}(u)=\int_{\Omega}F(u)dx,\ee
       it follows from \eqref{bl} (\eqref{bla} for $\mu=0$)
       that $u_n$ is bounded in $H^1(\Omega)$. It turns out from the weak compactness
       that there is a subsequence, still denoted by $u_n$,
       such that $u_n$ weakly converges to $v$ in $H^1(\Omega)$. Thus,
       $v \in H^1(\Omega)$. We infer from the Sobolev imbedding theorem that the
       imbedding $H^1(\Omega)\hookrightarrow  L^\gamma(\Omega) $ ( $1\leq \gamma< \frac{n+2}{n-2}$)
        is compact. As a result, $u_n$ strongly converges to $v$ in $L^\gamma(\Omega)$. It turns out
        from the assumption (\textbf{F2}) that $\mathcal{F}(u_n) \rightarrow
    \mathcal{F}(v)$.
    Since $\|u\|^2_{H^1}$ is weakly lower semi-continuous, it follows
     from (\ref{3.3}) that $E(v)=\dis{\inf_{u\in H^1}}E(u)$.
     From the assumption (\textbf{F2}) on subcritical growth, by
     the
     bootstrap argument and elliptic regularity theorem, this weak solution is also a strong solution in
     $H^2(\Omega)$. Furthermore, we can also use the bootstrap argument to show that $v\in
     L^\infty(\Omega)$.
The proof is completed.
\end{proof}
\br Under assumption (\textbf{F2}), by a further bootstrap argument,
we can show that $\psi$ is also a classical solution.  \er

\noindent \textbf{Step 3.} Define \be  \mathcal{D}:=\{u \in
H^2(\Omega) \ |\
\partial_\nu u + \mu u \mid_\Gamma = 0 \}.  \non \ee

Let $\psi$ be a fixed critical point of
   $E(u)$. In what follows we are going to establish  the generalized \L ojasiewicz--Simon
   inequality which extends the original one by Simom \cite{S83} for the second
   order nonlinear  parabolic equation subject to the Dirichlet boundary condition. Similar inequalities have been derived in
 our  recent papers \cite{WZ1, WZ2, WGZ2}
   to prove the convergence to equilibrium for various evolution equations
   (systems) subject to the dynamical boundary conditions.
\begin{mylemma} \label{ls1}Let $\psi$ be a
   critical point of $E(u)$. Then there exist constants
    $\theta\in
   (0,\frac{1}{2})$ and $\beta>0$ depending on  $\psi$ such that for
   any $u \in
   H^2(\Omega)$ satisfying $\|
   u-\psi\|_{H^1(\Omega)}< \beta $ and  $\|
    \partial_\nu u +\mu u \|_{L^2(\Gamma)}<\beta$, we have
   \be
   \| -\Delta u+f(u) \|_{(H^1(\Omega))'} + \|
    \partial_\nu u +\mu u \|_{L^2(\Gamma)} \ \geq\  | E(u) - E(\psi)
   |^{1-\theta}.\label{SL1a}\ee
   \end{mylemma}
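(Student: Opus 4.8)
The plan is to establish \eqref{SL1a} by a Lyapunov--Schmidt reduction, in the spirit of \cite{S83,J981} and \cite{WZ1,WZ2,WGZ2}, reducing the infinite-dimensional inequality to the classical finite-dimensional \L ojasiewicz gradient inequality for the real analytic function obtained after eliminating the directions transverse to the kernel of the linearization of $E$ at $\psi$. As a preliminary step I would observe that for $u\in H^2(\Omega)$ and $v\in H^1(\Omega)$, integration by parts and the trace theorem give $\langle E'(u),v\rangle=\int_\Omega(-\Delta u+f(u))v\,dx+\int_\Gamma(\partial_\nu u+\mu u)\gamma(v)\,dS$; since $\gamma:H^1(\Omega)\to L^2(\Gamma)$ is bounded, the left-hand side of \eqref{SL1a} dominates $\|E'(u)\|_{(H^1(\Omega))'}$ up to a fixed multiplicative constant, and by Lemma~3.2 together with a bootstrap argument $\psi$ is a strong (indeed classical) solution of \eqref{1.6}, so $E'(\psi)=0$ and $\psi\in L^\infty(\Omega)$. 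Thus it suffices to prove $\|E'(u)\|_{(H^1(\Omega))'}\ge c\,|E(u)-E(\psi)|^{1-\theta}$ on an $H^1(\Omega)$-ball around $\psi$: the constant $c$ is then absorbed and $\theta$ pushed into $(0,\frac{1}{2})$ by using that $|E(u)-E(\psi)|$ is small there.

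The second stage is the Fredholm setup. By (\textbf{F1})(\textbf{F2}) the growth of $f$ is subcritical, so $F$, $f$ and $f'$ induce analytic Nemytskii maps with values in the appropriate Lebesgue spaces, and $E$ is real analytic on $H^1(\Omega)$. The second variation $L:=E''(\psi):H^1(\Omega)\to(H^1(\Omega))'$, given by $\langle Lv,w\rangle=\int_\Omega(\nabla v\cdot\nabla w+f'(\psi)vw)\,dx+\mu\int_\Gamma vw\,dS$, is a compact perturbation of an isomorphism (the compact part is $v\mapsto[w\mapsto\int_\Omega f'(\psi)vw]$, compact since $\psi\in L^\infty(\Omega)$ and $H^1(\Omega)\hookrightarrow L^2(\Omega)$ is compact by Rellich's theorem), hence self-adjoint and Fredholm of index $0$. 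Its kernel $\mathcal{N}:=\ker L$ is finite-dimensional, and by elliptic regularity for the Robin ($\mu=1$) or Neumann ($\mu=0$) problem together with the subcritical growth, its elements are smooth. Let $P$ be the $L^2(\Omega)$-orthogonal projection onto $\mathcal{N}$ and $Q=I-P$; then $QL$ is an isomorphism of $\mathcal{N}^\perp$ onto $Q(H^1(\Omega))'$, which supplies the coercivity estimate $\|z\|_{H^1(\Omega)}\le C\|Lz\|_{(H^1(\Omega))'}$ for $z\in\mathcal{N}^\perp$.

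With this in hand I would carry out the reduction. Writing $u=\psi+w+z$ with $w=P(u-\psi)\in\mathcal{N}$ and $z=Q(u-\psi)\in\mathcal{N}^\perp$, the analytic implicit function theorem solves $Q\,E'(\psi+w+z)=0$ by an analytic map $z=g(w)$ defined for small $|w|$, with $g(0)=0$ and $g'(0)=0$; then $\mathcal{G}(w):=E(\psi+w+g(w))$ is real analytic on a ball of $\mathcal{N}$ with $\mathcal{G}'(0)=0$, and the classical \L ojasiewicz inequality gives $\theta\in(0,\frac{1}{2}]$ and $c_1,\beta_1>0$ such that $|\nabla\mathcal{G}(w)|\ge c_1|\mathcal{G}(w)-\mathcal{G}(0)|^{1-\theta}$ for $|w|<\beta_1$. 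To transfer this back I would use that $\nabla\mathcal{G}(w)=P\,E'(\psi+w+g(w))$ (the $\mathcal{N}^\perp$-part of $E'$ dropping out there), that $\|z-g(w)\|_{H^1(\Omega)}\le C\|Q\,E'(u)\|_{(H^1(\Omega))'}$ by the coercivity of $L$ on $\mathcal{N}^\perp$ and the Lipschitz continuity of $E'$ near $\psi$, and that, by Taylor expansion of $E$, $|E(u)-E(\psi)-(\mathcal{G}(w)-\mathcal{G}(0))|\le C\|Q\,E'(u)\|_{(H^1(\Omega))'}^2$ while $|\nabla\mathcal{G}(w)|\le C(\|P\,E'(u)\|+\|Q\,E'(u)\|_{(H^1(\Omega))'})$. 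Combining these with the finite-dimensional inequality, subadditivity of $t\mapsto t^{1-\theta}$, and $2(1-\theta)\ge1$ to swallow the quadratic term, one gets $\|E'(u)\|_{(H^1(\Omega))'}\ge c\,|E(u)-E(\psi)|^{1-\theta}$ on a small $H^1(\Omega)$-ball, and the preliminary reduction then yields \eqref{SL1a}.

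The main obstacle I expect is the functional-analytic bookkeeping forced by the dissipative, hence effectively non-homogeneous, boundary condition: checking that the linearized elliptic operator carrying the Robin/Neumann boundary condition is Fredholm, pinning down $\mathcal{N}$ and the regularity of its elements, and --- most importantly --- keeping the boundary part of the gradient measured in $L^2(\Gamma)$ throughout the comparison with $\|E'(u)\|_{(H^1(\Omega))'}$, rather than in a weaker dual norm, which is precisely the point that distinguishes \eqref{SL1a} from the \L ojasiewicz--Simon inequality for homogeneous (e.g. Dirichlet) boundary conditions. A secondary but genuine technical point is that the analyticity of the Nemytskii maps, hence of $\mathcal{G}$ --- which underlies both the analytic implicit function theorem and the classical \L ojasiewicz inequality --- really relies on the subcritical growth (\textbf{F2}) (so that $p+2<\frac{2n}{n-2}$ for $n\ge3$) and on (\textbf{F1}); and one must choose the neighbourhoods $|w|<\beta_1$, $\|z-g(w)\|$ small and $\|u-\psi\|_{H^1(\Omega)}<\beta$ consistently.
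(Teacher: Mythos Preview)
Your proposal is correct but follows a genuinely different route from the paper. The paper does \emph{not} redo a Lyapunov--Schmidt reduction for the full functional on $H^1(\Omega)$. Instead, it first invokes the already-known \L ojasiewicz--Simon inequality for functions $w\in\mathcal{D}=\{w\in H^2(\Omega):\partial_\nu w+\mu w=0\}$ (citing \cite{HM,J981,WGZ}), namely $\|M(w)\|_{(H^1)'}\ge C|E(w)-E(\psi)|^{1-\theta'}$, and then \emph{extends} this to general $u\in H^2(\Omega)$ by an elliptic lifting: one solves $\Delta w=\Delta u$ with $\partial_\nu w+w=0$ (for $\mu=1$) or $-\Delta w+w=-\Delta u+u$ with $\partial_\nu w=0$ (for $\mu=0$), so that $w\in\mathcal{D}$ and $\|u-w\|_{H^1}\le C\|\partial_\nu u+\mu u\|_{L^2(\Gamma)}$. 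Comparing $M(u)$ with $M(w)$ and $E(u)$ with $E(w)$ then transfers the inequality, and the constant is absorbed by slightly lowering~$\theta'$.

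Your approach is arguably cleaner conceptually: by noting that the left side of \eqref{SL1a} dominates $\|E'(u)\|_{(H^1(\Omega))'}$ (since the boundary term enters $E'$ through the bounded trace $\gamma:H^1(\Omega)\to L^2(\Gamma)$), you reduce everything to the abstract \L ojasiewicz--Simon inequality for the analytic functional $E$ on $H^1(\Omega)$, and the subsequent Lyapunov--Schmidt reduction is standard. This has the bonus of not using the hypothesis $\|\partial_\nu u+\mu u\|_{L^2(\Gamma)}<\beta$ at all, so you are in effect proving the stronger Lemma~\ref{ls} directly and rendering the two-case argument of Lemma~\ref{ls} unnecessary. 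The paper's route, by contrast, is more modular---it isolates the genuinely new ingredient (passing from homogeneous to inhomogeneous boundary data via the Robin/Neumann map) and reuses existing literature for the homogeneous case, at the cost of requiring the extra smallness hypothesis on the boundary flux, which is then removed in a separate step.
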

\begin{proof}
Let  \be M(u)=
  -\Delta u + f(u). \ee  Then $M(u)$ maps $u\in H^2(\Omega)$ into
  $L^2(\Omega)$. \\
First, we claim that there exist constants
    $\theta'\in
   (0,\frac{1}{2})$ and $\beta _1>0$ depending on $\psi$, $\forall w \in
   \mathcal{D}$ (i.e., $w$ satisfies the homogeneous Neumann or Robin boundary condition) satisfying $\|
   w-\psi \|_{H^1}< \beta _1 $, we have
   \be
   \| M(w)\|_{(H^1(\Omega))'} \ \geq\  C| E(w) - E(\psi)
   |^{1-\theta'}.\label{4.5w}\ee
The above claim follows from the same argument as that in
\cite{HM, J981} for the case of homogeneous Dirichlet boundary
condition (see also \cite{WGZ}), so the details can be omitted
here. However, we are now dealing with the nonhomogeneous boundary
condition. As a result, we have to extend the above result from
functions in $\mathcal{D}$ to $H^2(\Omega)$. Hereafter we always
keep in mind that $\psi$ is an equilibrium satisfying
$\partial_\nu \psi+\mu\psi=0$ on $\Gamma$.
As before we consider two cases. \\
\textbf{Case 1} $\mu=1$. For any $u\in H^2(\Omega)$, we consider the
following Robin Problem:
\be \left\{\begin{array}{l}  \Delta w =\Delta u,\quad x\in \Omega, \\
   \partial_\nu w + w=0, \;\;\; x\in \Gamma.\\
   \end{array}
   \nonumber
  \right.
 \ee
It has a unique solution $w$ belonging to $\mathcal{D}$. On the
other hand, we introduce the Robin map $R:\ H^s(\Gamma)\rightarrow
H^{s+(3/2)}(\Omega)$ which is defined as follows: \be Rp=q
\Longleftrightarrow
\left\{\begin{array}{l}  \Delta q=0 \ \ \text{in} \ \ \Omega, \\
   \partial_\nu q + q = p\ \ \text{on} \ \ \Gamma.\\
   \end{array}
   \nonumber
  \right.
\ee As shown in \cite{CEL}, $R$ is continuous for $s\in \mathbb{R}$.
Thus we have
 \be \|w-u\|_{H^1(\Omega)}\leq C \|\partial_\nu u+ u \|_{L^2(\Gamma)}, \label{vw}\ee
where the constant $C$ does not depend on $u$. As a result,  \be
\|w-\psi\|_{H^1(\Omega)}\leq
\|w-u\|_{H^1(\Omega)}+\|u-\psi\|_{H^1(\Omega)}\leq C \|\partial_\nu
u+ u \|_{L^2(\Gamma)}+\|u-\psi\|_{H^1(\Omega)}.\ee Then it is easy
to see the fact that $u\in H^2(\Omega)$ being in the small
neighbourhood of $\psi$ in $H^1(\Omega)$ and $\|\partial_\nu u+
u\|_{L^2(\Gamma)}$ being small imply that $w$ stays in a
small neighbourhood of $\psi$ in  $H^1(\Omega)$.\\
  By direct
calculation and the Sobolev imbedding theorem we can see that\bea
\| M(w) \|_{(H^1(\Omega))'}&\leq&
 \| M(u) \|_{(H^1(\Omega))'}+\|f(u)-f(w)\| _{(H^1(\Omega))'}\non \\
 &\leq &  \| M(u) \|_{(H^1(\Omega))'} + \left\|\int_0^1
 f'(u+t(w-u))(u-w)dt\right\|_{(H^1(\Omega))'}\non\\
 &\leq &  \| M(u) \|_{(H^1(\Omega))'} + \max_{0\leq t\leq1}\|f'(u+t(w-u))\|_{L^\frac{n}{2}(\Omega)}\|u-w\|_{L^\frac{2n}{n-2}(\Omega)}\non\\
  &\leq &  \| M(u) \|_{(H^1(\Omega))'} + C\|u-w\|_{H^1(\Omega)}\non\\
 &\leq& \| M(u) \|_{(H^1(\Omega))'}+C \|\partial_\nu u+ u \|_{L^2(\Gamma)}. \label{vw1}\eea
On the other hand, by the Newton-Leibniz formula and (\ref{vw}) we
can get \bea&&
 | E(w)- E(u)| \non\\
 & \leq & \left|\int_0^1\int_\Omega  M( u + t(w - u)) (u-w) dxdt\right|
 +\left|\int_0^1\int_\Gamma (1-t)(\partial_\nu u + u)(u-w)  dSdt\right|
 \non \\
& \leq & C \left(\| M(u)\|_{(H^1(\Omega))'}+\|\partial_\nu u+ u \|_{L^2(\Gamma)}\right)\|\partial_\nu u+ u \|_{L^2(\Gamma)}\non \\
& \leq & C \left(\| M(u)\|_{(H^1(\Omega))'}+\|\partial_\nu u+ u
\|_{L^2(\Gamma)}\right)^2. \label{vw2}\eea Since \be \mid E(w)-
E(\psi)\mid^{1-\theta'}\geq  \mid E(u)- E(\psi)\mid^{1-\theta'}-
\mid E(w)- E(u)\mid^{1-\theta'} \label{vw3},\ee and $0 < \theta'<
\frac{1}{2}$,  $ 2(1-\theta')-1>0$, then from  (\ref{4.5w}) and
(\ref{vw1})--(\ref{vw2}) we can conclude that \be C(\| M(u)
\|_{(H^1(\Omega))'}+\|
 \partial_\nu u + u
\|_{L^2(\Gamma)})\ \geq
 | E(u) - E(\psi) |
^{1-\theta'}, \non\ee for $ \|u-\psi\|_{H^1(\Omega)}< \beta$,
$\|\partial_\nu u+u\|_{L^2(\Gamma)}<\beta$,
where $\beta>0$ is chosen small enough such that $\|w-\psi\|_{H^1}< \beta_1$.\\
Next we choose $\varepsilon$, $0< \varepsilon<\theta'$ and $\beta$
smaller if necessary such that when $\|u-\psi\|_{H^1}< \beta$, \be
\ \frac{1}{C} \mid E(u) - E(\psi)|^{-\varepsilon}\geq 1.
\label{4.ddd}\ee Setting $\theta=\theta'
-\varepsilon\in(0,\frac{1}{2})$,  when $\|u-\psi\|_{H^1}< \beta$
and $\|
 \partial_\nu u + u \|_{L^2(\Gamma)}<\beta$, we finally have \be
\| M(u) \|_{(H^1(\Omega))'}+\|
 \partial_\nu u + u
\|_{L^2(\Gamma)} \ \geq
 |E(u) - E(\psi) |
^{1-\theta}. \label{4.30}\ee
 which is exactly (\ref{SL1a}).\\
\textbf{Case 2} $\mu=0$. For any $u\in H^2(\Omega)$, we consider the
following Neumann Problem:
\be \left\{\begin{array}{l}  -\Delta w+w = -\Delta u+u,\quad x\in \Omega, \\
   \partial_\nu w =0, \;\;\; x\in \Gamma.
   \end{array}
   \nonumber
  \right.
 \ee
It's easy to see that there exists a unique solution $w$ belonging
to $\mathcal{D}$ and \be \|w\|_{H^1(\Omega)}\leq C
\left(\|u\|_{H^1(\Omega)}+\|\partial_\nu u\|_{L^2(\Gamma)}\right).
\non\ee Again we can see that $u$ being in the small neighbourhood
of $\psi$ in $H^1(\Omega)$ with $\|\partial_\nu u\|_{L^2(\Gamma)}$
small implies that
$w$ stays in a small neighbourhood of $\psi$ in $H^1(\Omega)$.\\
Furthermore, by the energy estimate, we have \be
\|w-u\|_{H^1(\Omega)}\leq C \|\partial_\nu u \|_{L^2(\Gamma)}.
\label{vwa}\ee In a way similar to Case 1, the following estimates
follow: \bea &&\|  M(w) \|_{(H^1(\Omega))'}\non\\ &\leq&
 \| M(u) \|_{(H^1(\Omega))'}+\|(f(u)-u)-(f(w)-w)\| _{(H^1(\Omega))'}\non \\
 &\leq &  \| M(u) \|_{(H^1(\Omega))'} + \max_{0\leq t\leq 1}\|f'(u+t(w-u))\|_{L^\frac{n}{2}(\Omega)}\|u-w\|_{L^\frac{2n}{n-2}(\Omega)}+\|u-w\|_{(H^1(\Omega))'}\non\\
  &\leq &  \| M(u) \|_{(H^1(\Omega))'} + C\|u-w\|_{H^1(\Omega)}\non\\
 &\leq& \| M(u) \|_{(H^1(\Omega))'}+C \|\partial_\nu u \|_{L^2(\Gamma)}. \label{vw1a}\eea
On the other hand, by the Newton-Leibniz formula and (\ref{vwa}),
we can get \bea&&
 |E(w)- E(u)| \non\\
 & \leq & \left|\int_0^1\int_\Omega  M( u + t(w - u)) (u-w) dxdt\right|
 +\left|\int_0^1\int_\Gamma (1-t)\partial_\nu u(u-w)  dSdt\right|
 \non \\
& \leq & C \left(\| M(u)\|_{(H^1(\Omega))'}+\|\partial_\nu u
\|_{L^2(\Gamma)}\right)^2. \label{vw2a}\eea Then arguing as in Case
1, we can get the required corresponding result for $\mu=0$.

 Thus, the lemma is proved.
 \end{proof}
Following the idea in \cite{WZ2} (see also \cite{WGZ2}) we can
easily extend the previous lemma in such a way that we only need the
smallness of $\|u-\psi\|_{H^1(\Omega)}$,

\begin{mylemma}[Generalized \L ojasiewicz--Simon Inequality]  \label{ls}Let $\psi$ be a
   critical point of $E(u)$. Then there exist constants
    $\theta\in
   (0,\frac{1}{2})$ and $\beta_0>0$ depending on  $\psi$ such that for
   $\forall u \in
   H^2(\Omega)$, $\|
   u-\psi\|_{H^1(\Omega)}< \beta _0$, we have
   \be
   \| -\Delta u+f(u) \|_{(H^1(\Omega))'} + \|
    \partial_\nu u +\mu u \|_{L^2(\Gamma)} \ \geq\ | E(u) - E(\psi)
   |^{1-\theta}.\label{SL1}\ee
   \end{mylemma}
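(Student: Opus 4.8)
The plan is to deduce Lemma \ref{ls} from Lemma \ref{ls1} by a short dichotomy argument, following \cite{WZ2, WGZ2}. Fix the critical point $\psi$, and let $\theta\in(0,\frac{1}{2})$ and $\beta>0$ be the constants furnished by Lemma \ref{ls1}. The key observation is that the only way the hypotheses of Lemma \ref{ls1} can fail while $\|u-\psi\|_{H^1(\Omega)}$ is small is that $\|\partial_\nu u+\mu u\|_{L^2(\Gamma)}$ is not small; but in that regime the boundary term alone already controls the left-hand side of (\ref{SL1}), and it is then enough to know that $|E(u)-E(\psi)|$ is small, which follows from the continuity of $E$ on $H^1(\Omega)$.

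First I would record that $E$ is continuous on $H^1(\Omega)$. The quadratic part $\frac{1}{2}\|\nabla u\|^2+\frac{\mu}{2}\|u\|_{L^2(\Gamma)}^2$ is plainly continuous in the equivalent $H^1(\Omega)$ norm, while for the nonlinear part $\mathcal{F}(u)=\int_\Omega F(u)\,dx$ one writes $F(u)-F(\psi)=\int_0^1 f(\psi+t(u-\psi))(u-\psi)\,dt$ and estimates by H\"older's inequality, invoking (\textbf{F2}), the Sobolev embedding $H^1(\Omega)\hookrightarrow L^{\frac{2n}{n-2}}(\Omega)$, and the inequality $p+1<\frac{n+2}{n-2}$ --- essentially the same estimate already used above to show $\mathcal{F}(u_n)\to\mathcal{F}(v)$ along a weakly convergent minimizing sequence. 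Consequently, having fixed $\theta$ and $\beta$, one can choose $\beta_0\in(0,\beta]$ so small that $\|u-\psi\|_{H^1(\Omega)}<\beta_0$ forces $|E(u)-E(\psi)|^{1-\theta}<\beta$.

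Then I would take an arbitrary $u\in H^2(\Omega)$ with $\|u-\psi\|_{H^1(\Omega)}<\beta_0$ and split into two cases. If $\|\partial_\nu u+\mu u\|_{L^2(\Gamma)}<\beta$, then, since also $\|u-\psi\|_{H^1(\Omega)}<\beta_0\leq\beta$, both smallness conditions of Lemma \ref{ls1} are met and (\ref{SL1}) is nothing but (\ref{SL1a}). If instead $\|\partial_\nu u+\mu u\|_{L^2(\Gamma)}\geq\beta$, then the left-hand side of (\ref{SL1}) is at least $\|\partial_\nu u+\mu u\|_{L^2(\Gamma)}\geq\beta>|E(u)-E(\psi)|^{1-\theta}$, so (\ref{SL1}) holds in this case as well. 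Since the exponent $\theta$ is not altered, the lemma follows with the same $\theta$ and with $\beta_0$ as above.

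I do not anticipate a real obstacle here: all the substantive work --- the analyticity-based \L ojasiewicz--Simon estimate for functions satisfying the homogeneous boundary condition, and its transfer to general $u\in H^2(\Omega)$ through the Robin/Neumann maps --- has already been carried out in Lemma \ref{ls1}. The only point requiring mild attention is the order of quantifiers: $\beta_0$ must be selected after $\theta$ and $\beta$ are fixed, so that the implication above is legitimate, and one should note that shrinking $\beta$ (hence $\beta_0$) does not disturb the already-chosen $\theta$.
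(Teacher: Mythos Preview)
Your proposal is correct and follows essentially the same dichotomy argument as the paper: invoke Lemma \ref{ls1} when $\|\partial_\nu u+\mu u\|_{L^2(\Gamma)}<\beta$, and otherwise use continuity of $E$ on $H^1(\Omega)$ to ensure $|E(u)-E(\psi)|^{1-\theta}<\beta\leq\|\partial_\nu u+\mu u\|_{L^2(\Gamma)}$, then set $\beta_0=\min\{\beta,\widetilde\beta\}$. Your remarks on the order of quantifiers and the continuity of $\mathcal{F}$ via (\textbf{F2}) and Sobolev embedding are exactly the points the paper uses.
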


\begin{proof} Let $\beta>0$ and $\theta\in\left(0,\frac12\right)$ be the constants appearing in Lemma \ref{ls1}.
\\
We consider the following two cases: \\
(i) $\|u-\psi\|_{H^1(\Omega)}<\beta$ and $\|
    \partial_\nu u +\mu u \|_{L^2(\Gamma)}<\beta$. Then from Lemma \ref{ls1} we are
done.\\
(ii) $\|u-\psi\|_{H^1(\Omega)}<\beta$ but $\|
    \partial_\nu u +\mu u \|_{L^2(\Gamma)}\geq \beta$. By Sobolev imbedding
     $H^1(\Omega)\hookrightarrow L^\frac{2n}{n-2}(\Omega)$, there
exist $\widetilde{\beta}>0$ depending  on $\psi$ such that for any
$u$ satisfying $\|u-\psi\|_{H^1(\Omega)}<\widetilde{\beta}$,\be
|E(u)-E(\psi)|^{1-\theta}<\beta.\label{552}\ee Thus, we have
\begin{eqnarray}
\| -\Delta u+f(u) \|_{(H^1(\Omega))'} +
\|\partial_\nu u +\mu u \|_{L^2(\Gamma)} &\geq &\|\partial_\nu u + \mu u\|_{L^2(\Gamma)}\nonumber\\
& \geq & \beta\non\\
&> &|E(u)-E(\psi)|^{1-\theta}.
\end{eqnarray}
Finally by setting $\beta_0=\min\{\beta,\widetilde{\beta}\}$, the
lemma is proved.
\end{proof}

\noindent \textbf{Step 4.} After the previous preparations, we now
proceed to finish the proof of Theorem \ref{con2}, following a
simplified  argument introduced in \cite{J981} in which the key
observation is that after certain time $t_0$, the solution $u$
will fall into the small neighborhood of a certain equilibrium
$\psi(x)$, and stay there forever. \\First, it is easy to see that
$$\|u_t\|\rightarrow 0\ \ \ \text{as} \ t\rightarrow+\infty.$$
   From Step 1, there is a sequence $\{t_n\}_{n\in\mathbb{N}}, \, t_n\rightarrow
   +\infty$ such that
   \be u(x, t_n) \rightarrow \psi(x),\qquad \text{in}\ H^1(\Omega), \label{3.ee}\ee
   where $\psi(x)\in \omega(u_0)$ is a certain equilibrium.
   On the other hand, it follows from (\ref{2.3}) that $E(u)$ is
   decreasing in time and \be \lim_{n\rightarrow +\infty}E(t_n)=
E(\psi).\label{4.KKK}\ee We now consider all possibilities.

   (1). If there is a $t_0>0$ such that at this time
   $E(u)=E(\psi)$, then for all $t>t_0$, we deduce from
   (\ref{2.3}) that $u$ is independent of $t$. Since $u(x, t_n)
   \rightarrow \psi$, the theorem is proved.

   (2). If there is $t_0>0$ such that for all $t\geq t_0$,
   $u$ satisfies the condition of Lemma \ref{ls}, i.e.,
   $\|u-\psi\|_{H^1(\Omega)} < \beta_0$, then for $\theta \in (0,
   \frac{1}{2})$ introduced in Lemma  \ref{ls}, we have
   \be
   \frac{d}{dt}(E(u)-E(\psi))^{\theta}=\theta
   (E(u)-E(\psi))^{\theta-1}\frac{d E(u)}{dt}.\label{3.51}\ee
   Combining it with (\ref{SL1}), (\ref{2.3}) yields
\be \frac{d}{dt}(E(u) - E(\psi))^\theta +
\frac{\theta}{2}\left(\parallel -\Delta u+f(u) \parallel + \parallel
u_t
\parallel_{L^2(\Gamma)}\right)\ \leq\
0. \label{3.52}\ee Integrating from $t_0$ to $t$,   we get \be(E(u)
- E(\psi))^{\theta} + \frac{\theta}{2} \int_{t_0}^t \left(\parallel
-\Delta u+f(u) \parallel +
\parallel  u_t
\parallel_{L^2(\Gamma)}\right) d\tau
\leq (E(u(t_0)) - E(\psi))^\theta. \label{3.53}\ee Since $E(u) -
E(\psi) \geq 0$,  we have \be \int_{t_0}^t \left(\parallel -\Delta
u+f(u) \parallel + \parallel u_t
\parallel_{L^2(\Gamma)}\right) d\tau < +\infty, \label{3.54}\ee which implies that
for all $t\geq t_0$, \be \int^t _{t_0}\|u_t\|d \tau \leq C.\ee This
easily yields that as $t\rightarrow +\infty$, $u(x, t)$ converges in
$L^2(\Omega)$. Since the orbit is compact in $H^1(\Omega)$, we can
deduce from uniqueness of limit that (\ref{1.5a}) holds, and the
theorem is proved.

 (3).  It
follows from (\ref{3.ee}) that for  any $ \varepsilon>0$ with
$\varepsilon < \beta_0$,  there exists  an integer $N$ such that
when $n \geq N$, \bea && \parallel
 u(\cdot,t_n) - \psi
 \parallel \leq   \|u(\cdot, t_n)-\psi\|_{H^1(\Omega)}< \frac{\varepsilon}{2}, \label{3.555}\\
 &&\frac{1}{\theta } (E(u(t_n)) - E(\psi))^\theta <
 \frac{\varepsilon}{4}. \label{3.56}\eea
Define
 \be \bar{t}_n = \sup \{\ t > t_n \mid \ \parallel
u(\cdot,s) - \psi \parallel_{H^1(\Omega)} < \beta _0,\ \forall s\in
[t_n , t] \} \label{3.57}\ee It follows from (\ref{3.555}) and
continuity of the orbit in $H^1(\Omega)$ for $t>0$ that $\bar{t}_n
>t_n$ for all
$n \geq N$. Then there are two possibilities:\\
(i). If there exists $n_0\geq N$ such that $\bar{t}_{n_0}=+\infty$,
then from the previous discussions in (1)(2), the
theorem is proved.\\
(ii) Otherwise, for all $n\geq N$, we have $t_n < \bar{t}_n <
+\infty$, and for all $t\in [t_n, \bar{t}_n]$, $E(\psi)<E(u(t))$.
Then from (\ref{3.53}) with $t_0$ being replaced by $t_n$, and $t$
being replaced by $\bar{t}_n$
 we deduce that
\be \int_{t_n}^{\bar{t}_n} \parallel\! u_t \!\parallel d\tau \leq
\frac{2 }{ \theta } (E(u(t_n)) - E(\psi))^\theta <
\frac{\varepsilon}{2}. \label{3.59}\ee Thus, it follows that \be
\parallel\! u(\bar{t}_n) - \psi \!
\parallel\ \leq\ \ \parallel\! u(t_n) - \psi
\!\parallel + \int_{t_n}^{\bar{t}_n}
\parallel\! u_t \!\parallel d\tau < \varepsilon
\label{3.60}\ee which implies that when $n\rightarrow +\infty $, \be
u(\bar{t}_n) \rightarrow \psi, \qquad \text{in} \ \ L^2(\Omega).\ee
Since $\bigcup_{t\geq\delta}u(t)$ is relatively compact in
$H^1(\Omega)$, there exists a subsequence of $\{u(\bar{t}_n)\}$,
still denoted  by $\{u(\bar{t}_n)\}$ converging to $\psi$ in
$H^1(\Omega)$. Namely, when $n$ is sufficiently large, \be\|
u(\bar{t}_n) - \psi \|_{H^1(\Omega)}\ < \beta_0,\ee which
contradicts the definition of $\bar{t}_n$ that $\|u(\cdot,
\bar{t}_n)-\psi\|_{H^1(\Omega)}=\beta_0$.

Thus, Theorem \ref{con2} is proved.

\noindent\textbf{Acknowledgement:} The author is indebted to Prof.
S. Zheng for his enthusiastic help with this paper. This work is
supported by the NSF of China under grant No.~10371022, by Chinese
Ministry of Education under the grant No. 20050246002, by the Key
Laboratory of Mathematics for Nonlinear Sciences sponsored by
Chinese Ministry of Education and the Graduate Student Innovation
Foundation of Fudan University.


\end{document}